\newtheorem{theorem}{Theorem}[section]
\newtheorem{lemma}[theorem]{Lemma}
\newtheorem{prop}[theorem]{Proposition}
\newtheorem{cor}[theorem]{Corollary}
\theoremstyle{definition}
\newtheorem{remark}[theorem]{Remark}
\numberwithin{equation}{section}
\newcommand{\Kps}[2]{M^\mathrm{Kps}_{#1, #2}}
\newcommand{\Kss}[2]{\mathcal{M}^\mathrm{Kss}_{#1, #2}}
\newcommand{\modulismooth}{M_{\rho=4}^\mathrm{sm}}
\renewcommand{\setminus}{\smallsetminus}
\renewcommand{\emptyset}{\varnothing}
\newcommand{\into}{\hookrightarrow} 
\newcommand{\onto}{\twoheadrightarrow} 
\newcommand{\al}{\alpha}
\DeclareMathOperator{\Hom}{Hom} 
\DeclareMathOperator{\Spec}{Spec} 
\DeclareMathOperator{\Spf}{Spf} 
\DeclareMathOperator{\Pic}{Pic}
\DeclareMathOperator{\Cl}{Cl}
\DeclareMathOperator{\Div}{Div}
\DeclareMathOperator{\conv}{conv}
\DeclareMathOperator{\Aut}{Aut}
\DeclareMathOperator{\GL}{GL}
\newcommand{\git}{/ \! \!  /}
\def\pow#1{ \llbracket  #1 \rrbracket }
\newcommand{\Def}[1]{\mathrm{Def}_{#1}}
\newcommand\cO{\mathcal{O}}
\newcommand\cX{\mathcal{X}}
\newcommand\cU{\mathcal{U}}
\renewcommand\AA{\mathbb{A}}
\newcommand\CC{\mathbb{C}}
\newcommand\NN{\mathbb{N}}
\newcommand\PP{\mathbb{P}}
\newcommand\QQ{\mathbb{Q}}
\newcommand\RR{\mathbb{R}}
\newcommand\TT{\mathbb{T}}
\newcommand\ZZ{\mathbb{Z}}
\newcommand\rH{\mathrm{H}}
\title[On K-moduli of Fano threefolds with degree 28 and Picard rank 4]{On K-moduli of Fano threefolds \\ with degree 28 and Picard rank 4}
\author{Liana Heuberger}
\address{Institut de Mathématiques de Marseille (I2M), 3 place Victor Hugo, 13331 Marseille cedex 3, France}
\email{liana.heuberger@univ-amu.fr}
\author{Andrea Petracci}
\address{Dipartimento di Matematica, Universit\`a di Bologna, Piazza di Porta San~Donato 5, Bologna, 40126, Italy}
\email{a.petracci@unibo.it}
\begin{document}

\begin{abstract}
We analyse the local structure of the K-moduli space of Fano varieties at a toric singular K-polystable Fano $3$-fold, which deforms to smooth Fano $3$-folds with anticanonical volume $28$ and Picard rank $4$. In particular, by constructing an algebraic deformation of this toric singular Fano, we show that the irreducible component of K-moduli parametrising these smooth Fano $3$-folds is a rational surface.
\end{abstract}

\maketitle

\section{Introduction}

\subsection{Context}

We work over $\CC$.  In the study of Fano varieties \emph{K-stability} \cite{tian_KE, donaldson_stability}  has become a fundamental topic in recent years, for two main reasons: it is the algebraic counterpart of the existence of K\"ahler--Einstein metrics \cite{chen_donaldson_sun,tian_KstabKE}, and it has allowed to construct projective moduli spaces for Fano varieties.

More precisely, by \cite{ABHLX,BLX,properness_K_moduli,blum_xu_uniqueness,jiang_boundedness,lwx,projectivity_K_moduli_final,xu_minimizing,xu_zhuang}, for every positive integer $n$ and for every positive rational number $v$, there exists a projective scheme $\Kps{n}{v}$ over $\CC$, whose closed points are in natural one-to-one correspondence with K-polystable Fano varieties $X$ of dimension $n$ and anticanonical volume $v$. This is called a \emph{K-moduli space}. We refer the reader to \cite{xu_survey} for a survey on this topic.

An important problem is to decide if a given Fano variety is K-(semi/poly)stable: this is the \emph{Calabi problem}.
For smooth del Pezzo surfaces this has been completely solved in \cite{tian_del_pezzo}.
For the general member of the 105 deformation families of smooth Fano $3$-folds, the Calabi problem has been solved in \cite{calabi_problem_book}.
For all (not necessarily general) smooth Fano 3-folds there is much recent and ongoing work, e.g.\ \cite{liu_rank2, cheltsov_denisova_fujita, cheltsov_park, cheltsov_fujita_kishimoto_okada, denisova, belousov_loginov}.
Other smooth Fano varieties are studied in \cite{zhuang, abban_zhuang}.

The Calabi problem for toric Fano manifolds has been translated into an easy combinatorial condition via differential geometry methods: a toric Fano manifold admits a Kähler--Einstein metric if and only if the moment polytope of its toric boundary has barycentre at the origin \cite{Mabuchi, BatyrevSelivanova}. Blum and Jonsson~\cite{BlumJonsson} found a completely algebraic proof of the fact that the same condition is equivalent to the K-polystability of a (possibly singular) toric Fano variety.
K-stability of Fano varieties with the action of certain algebraic groups (e.g.\ T-varieties or spherical varieties) has been investigated in \cite{IltenSuess, Suess, Delcroix}.

It is natural to investigate the geometry of K-moduli spaces of Fano varieties.
Only few cases are known, e.g.\ smooth del Pezzo surfaces  \cite{mabuchi_mukai,odaka_spotti_sun}, cubic $3$-folds \cite{liu_xu}, quartic $3$-folds \cite{quarticthreefolds} and cubic $4$-folds \cite{liu_cubic_fourfolds}.

Here we present results about a specific family of smooth Fano $3$-folds by using toric geometry and deformation theory techniques which originate in mirror symmetry \cite{ccggsk}.

\subsection{The family \textnumero4.2}
There is only one deformation family of \emph{smooth} Fano $3$-folds with Picard rank $4$ and anticanonical volume $28$: this is the $2$nd entry in the Mori--Mukai list \cite{MM82,MM03} (and also in \cite{iskovskikh_prokhorov}) of families of smooth Fano $3$-folds with Picard rank $4$, thus it is denoted by \textnumero4.2 in \cite{calabi_problem_book}.
This family is denoted by $\mathrm{MM}_{4-3}$ in \cite[\S87]{quantum_periods} because the authors have reordered the families of smooth Fano $3$-folds of Picard rank $4$ so that the anticanonical volume increases along the list.
The other invariants of this family are $h^{1,2}(X) = 1$ and $\chi(T_X) = -1$.

Each member of the family \textnumero4.2 is the blow-up of the cone over a smooth quadric surface $S \subset \PP^3$ with centre the disjoint union of the vertex and an elliptic curve on $S$.
Each member of the family \textnumero4.2 is K-polystable by \cite[\S4.6]{calabi_problem_book}, hence gives a point in $\Kps{3}{28}$, which is the K-moduli space of $3$-dimensional K-polystable Fano varieties with anticanonical volume $28$.

We prove the following:

\begin{theorem}\label{thm:main}
	Let $M = \Kps{3}{28}$ be the K-moduli space of K-polystable Fano $3$-folds with anticanonical volume $28$.
	Let $\modulismooth \subset M$ be the locus which parametrises the smooth K-polystable Fano $3$-folds with anticanonical volume $28$ and Picard rank $4$.
	
	Then there exists an open subscheme $U$ of $M$ such that $U$ is a smooth rational surface, $U \cap \modulismooth \neq \emptyset$ and $U \setminus \modulismooth$ is a smooth rational curve.
\end{theorem}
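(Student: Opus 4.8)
The plan is to reduce the global statement to an explicit local computation at a single, well-chosen singular point of $M$, via the étale-local structure of K-moduli. First I would produce a singular toric $\QQ$-Fano $3$-fold $X_0$ of anticanonical volume $28$ admitting a $\QQ$-Gorenstein smoothing to the general member of \textnumero4.2; such a degeneration is predicted by mirror symmetry \cite{ccggsk}, and I expect $X_0$ to be the blow-up of the cone over the quadric $Q \cong \PP^1 \times \PP^1$ along the disjoint union of the vertex and the toric anticanonical cycle on $Q$ (the degeneration of the elliptic $(2,2)$-curve to the torus-invariant cycle of four lines). Using the Blum--Jonsson criterion \cite{BlumJonsson} I would check that $X_0$ is K-polystable by verifying that the barycentre of the moment polytope of $-K_{X_0}$ lies at the origin, so that $[X_0] \in M$. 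By the reductivity and local structure theory for K-moduli \cite{ABHLX}, the group $\Aut(X_0)$ is reductive and, after shrinking, there is an open $U \subseteq M$ containing $[X_0]$ with
\[
	U \;\cong\; \Def{X_0}^{\mathrm{Kss}} \git \Aut(X_0),
\]
where $\Def{X_0}$ is the miniversal deformation space and the superscript is the (open, invariant) K-semistable locus. The entire problem is thereby reduced to an explicit deformation-theoretic and GIT computation.

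The second step is to describe $\Def{X_0}$ and $\Aut(X_0)$. Since $X_0$ is toric I would compute the tangent space $T^1_{X_0}$ combinatorially via Altmann's toric deformation theory (Minkowski decompositions in the relevant degrees) and then, following the strategy announced in the abstract, exhibit an explicit algebraic family $\cX \to B$ over a smooth base realising these first-order deformations. The key technical point is to prove that $X_0$ is unobstructed, so that $\Def{X_0} \cong \AA^{\dim T^1_{X_0}}$ and $\cX \to B$ is genuinely versal; displaying the total family over a smooth $B$ simultaneously proves versality and smoothness. In parallel I would identify $\Aut(X_0)$: its maximal torus is $T \cong \Gm^3$, and I would check that no extra roots survive the two blow-ups, so that $\Aut(X_0)^0 = T$, with a distinguished subtorus $\Gm \subseteq T$ (the cone-scaling, which is the generic automorphism of the smooth members of \textnumero4.2) acting trivially on $\Def{X_0}$ and the quotient $\Gm^2 = T/\Gm$ acting effectively. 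The expected dimension count — generic stabiliser $\Gm$ of dimension $1$, effective group $\Gm^2$, and a two-dimensional quotient — forces $\dim T^1_{X_0} = 4$.

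The third step is the GIT. Openness of K-semistability, together with the fact that the generic fibre of $\cX \to B$ is the K-polystable smooth \textnumero4.2, shows that $\Def{X_0}^{\mathrm{Kss}}$ is a dense open set, and I would cut out its boundary by determining which degenerate fibres remain K-(semi)stable. Forming the good moduli space $\AA^4 \git \Gm^2$ for the linearisation dictated by the K-semistable chamber realises $U$ as a toric surface; rationality is then automatic, and smoothness reduces to checking that the associated fan is smooth (equivalently, that the stabilisers along the relevant locus create no quotient singularities). Finally, the smooth fibres form the open dense locus $U \cap \modulismooth$, while the remaining K-polystable fibres are singular; I would identify the boundary $U \setminus \modulismooth$ with the discriminant locus of $\cX \to B$, which is a $T$-invariant subscheme of $\Def{X_0}^{\mathrm{Kss}}$ and hence descends to a torus-invariant, thus rational, curve in $U$, and check that it is irreducible and smooth.

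The hard part will be the precise determination of the K-semistable locus $\Def{X_0}^{\mathrm{Kss}}$, because it requires controlling the K-stability of the \emph{non-toric} singular fibres of $\cX \to B$, which are not accessible to the purely combinatorial Blum--Jonsson test; I would handle this by combining openness of K-(semi)stability, a classification of the possible degenerations, and valuative estimates of stability thresholds on the remaining (toric) fibres. The second delicate point is the GIT conclusion itself: establishing that $U$ is genuinely smooth rather than a quotient-singular toric surface, that the chamber makes the quotient exactly two-dimensional, and that the boundary $U \setminus \modulismooth$ is a single smooth rational curve rather than a reducible or singular one. These two points carry the real content; the K-polystability of $X_0$, the deformation computation, and the rationality of the quotient are comparatively routine once the explicit family has been written down.
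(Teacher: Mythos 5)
Your outline follows the same overall strategy as the paper (the paper's $X$ is exactly the toric degeneration you describe, with four ordinary double points; K-polystability via the barycentre criterion; a $4$-dimensional unobstructed deformation space; an explicit algebraic family; the Luna slice; the discriminant analysis), but two steps in your plan are genuinely wrong as stated. First, you quotient only by the torus. The stabiliser is $\Aut(X) = T_N \rtimes \Aut(P)$ where $\Aut(P) \cong D_8 \rtimes C_2$ has order $16$, and the good moduli space is the quotient by this \emph{full} reductive group, not by the effective torus $\Gm^2$. The torus-invariant subring of $\CC\pow{t_\al,t_\beta,t_\gamma,t_\delta}$ is $\CC\pow{x,y}$ with $x = t_\al t_\gamma$, $y = t_\beta t_\delta$, but an order-$4$ element of $\Aut(P)$ acts by $x \mapsto -y$, $y \mapsto -x$, so the true local model is $\CC\pow{xy,\, x-y}$: your $\AA^4 \git \Gm^2$ is a double cover of the actual moduli germ, not the moduli germ. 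This is not a harmless omission: in your model the discriminant is $\{xy=0\}$, a \emph{nodal} union of two lines through the origin, which contradicts the very statement you are proving; it is precisely the residual involution that folds the two branches onto the single smooth curve $\{xy = 0\}$ in $\Spec\CC\pow{xy, x-y}$. Likewise, smoothness of the quotient is not a fan computation once the finite group is included --- it holds here only because the residual involution happens to be a pseudo-reflection (Chevalley--Shephard--Todd), and this must be checked.

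Second, "realises $U$ as a toric surface; rationality is then automatic" does not follow from the slice theorem. The Luna \'etale slice theorem gives a (formally) \'etale morphism from the slice quotient to $\Kps{3}{28}$, not a Zariski-open immersion, and rationality is not an \'etale-local, let alone formal-local, property; this is exactly the point the paper emphasises when it says no Zariski-local information follows from the infinitesimal study. The paper's actual route --- for which you have all the ingredients but never assemble them --- is: the algebraic family over an open $W \subseteq \AA^4$ (all fibres K-semistable by openness of K-semistability) induces a morphism $W \to M$ which, by the formal computation, dominates the component of $M$ through $[X]$; dominance from a rational variety gives unirationality of that component, and Castelnuovo's theorem (a unirational surface with smooth $U$ already secured by the slice argument is rational) concludes, with the boundary curve rational because it is dominated by the rational components of $\{c_1c_2c_3c_4=0\}$. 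Relatedly, the step you flag as the hard core --- determining $\Def{X}^{\mathrm{Kss}}$ exactly and controlling K-stability of non-toric singular fibres --- is never needed in this argument, while the step you call comparatively routine (writing down the algebraic family and proving it is versal, which the paper does via a Laurent-inversion embedding $X \subset F$ and nontrivial formal changes of variables identifying the induced deformations of each ODP chart with the standard smoothings) is where most of the real work lies.
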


We therefore conclude that the irreducible component of $\Kps{3}{28}$ containing $\modulismooth$ is a rational surface. 

\subsection{Idea of the proof}

We pick a singular toric Fano $3$-fold $X$ whose singular locus consists of $4$ ordinary double points: this has reflexive ID 735 in the Graded Ring Data Base \cite{GRDB-toric3}. This variety $X$ is K-polystable because the barycentre of the moment polytope of its toric boundary is the origin. It has been shown to deform to the members of the family \textnumero4.2 by Galkin in his PhD thesis \cite{Galkin}.
Moreover, by \cite[\S5.1]{ask_petracci} the base space of the miniversal deformation of $X$ is a polydisc of dimension $4$. 
However, no information about the local structure, in the Zariski topology, of the K-moduli space can be derived from this infinitesimal study.

Here, we construct an explicit algebraic model of the miniversal deformation of $X$ over a Zariski neighbourhood of the origin in the affine space $\AA^4$, i.e.\ we exhibit a flat algebraic deformation of $X$ over $\AA^4$ such that its  formal completion at the origin is the miniversal deformation of $X$.

To achieve this we use the Laurent inversion method introduced in \cite{laurent_inversion} and used to systematically construct (not necessarily smooth) Fano 3-folds in \cite{from_cracked_to_fano,Heuberger_QFLI}.
Indeed, we find a toric $4$-fold $F$ such that $X$ is a divisor in $F$ and we are able to smooth $X$ by deforming it inside the linear system $\vert \cO_F(X) \vert$. We then show that this algebraic deformation induces the miniversal one.

We deduce that the K-moduli space is unirational near the point $[X]$ corresponding to $X$.
Moreover, by using deformation-theoretic techniques in Proposition~\ref{prop:definition_P_X}(8)  we show that the K-moduli space is smooth of dimension $2$ at $[X]$.
This proves the rationality in the statement.

\begin{remark}
%
%
In this paper we describe geometric properties of a $2$-dimensional component of K-moduli of smoothable Fano $3$-folds, by analysing the deformation theory of a singular toric Fano $3$-fold.
In later work, Cheltsov, Duarte Guerreiro, Fujita, Krylov and Martinez-Garcia~\cite[Corollary~1.14]{cheltsov2023kstability} express this rational surface as a GIT quotient. Their methods are independent of ours, and rely on the birational description of smooth members of the family~\textnumero4.2.

\end{remark}

\begin{remark}
	Let $X'$ be the Gorenstein toric Fano $3$-fold which appears in \cite{GRDB-toric3} with reflexive ID~1518 and canonical ID~61936.
	We expect that $X'$ deforms to two different families of smooth Fano $3$-folds, namely \textnumero4.2 (which is the family studied in this paper) and \textnumero2.21 (which is the family consisting of the blowups of the smooth quadric $3$-fold at a twisted quartic curve). Both families have anticanonical volume $28$, but they have different Betti numbers. The general member of the family \textnumero2.21 is K-polystable by \cite{calabi_problem_book}.

	By \cite{cheltsov2023kstability} the two irreducible components of $\Kps{3}{28}$ generically parametrising K-polystable members of the families \textnumero2.21 and \textnumero4.2 belong to distinct connected components of $\Kps{3}{28}$. This is not a contradiction with the expected properties of $X'$ described above because $X'$ is not K-semistable, hence it does not give a point in K-moduli.
\end{remark}

\subsection*{Notation and conventions} \label{sec:notation}
The set of non-negative (resp.\ positive) integers is denoted by $\NN$ (resp.\ $\NN^+$).
We work over an algebraically closed field of characteristic zero, denoted by $\CC$.
Every toric variety or toric singularity is assumed to be normal.
A Fano variety is a normal projective variety whose anticanonical divisor is $\QQ$-Cartier and ample.
A del Pezzo surface is a Fano variety of dimension $2$.


\subsection*{Acknowledgements}
LH would like to thank Tom Coates and Al Kasprzyk for many conversations during the preparation of \cite{Heuberger_QFLI}, which has proved to be a comprehensive testing ground for $3$-fold Laurent inversion. AP wishes to thank Ivan Cheltsov and Anne-Sophie Kaloghiros for many fruitful conversations.

LH is supported by Leverhulme grant RPG-2021-149.
AP acknowledges partial financial support from
INdAM GNSAGA ``Gruppo Nazionale per le Strutture Algebriche, Geometriche e le loro Applicazioni''
and from
PRIN2020 2020KKWT53 ``Curves, Ricci flat Varieties and their Interactions''. 

We use the Magma computational algebra software \cite{Magma} in Section~\ref{LIsec}.

\section{A toric variety}

Here we study a specific toric Fano $3$-fold, which was already studied in \cite[\S5.1]{ask_petracci}. Its associated polytope first appeared in the classification of reflexive polytopes \cite{kreuzer_skarke} by Kreuzer and Skarke and is part of the Graded Ring Database ``Toric canonical Fano $3$-folds" list \cite{GRDB-toric3,Kasprzyk}, appearing with canonical ID $674679$ and reflexive ID $735$.

\begin{prop} \label{prop:definition_P_X}
	In the lattice $N = \ZZ^3$ consider the polytope $P$ whose vertices are:
	\begin{equation} \label{eq:rays_of_X}
		\begin{gathered}
			\rho_1 = \begin{pmatrix}
				1 \\ 0 \\ 0 
			\end{pmatrix}\!, \
			\rho_2 = \begin{pmatrix}
				0 \\ 1 \\ 0 
			\end{pmatrix}\!, \
			\rho_3 = \begin{pmatrix}
				0 \\ 0 \\ 1
			\end{pmatrix}\!, \
			\rho_4 = \begin{pmatrix}
				1 \\ 1 \\ 0
			\end{pmatrix}\!, \
			\rho_5 = \begin{pmatrix}
				1 \\ 0 \\ 1
			\end{pmatrix}\!, \\
			\rho_6 = - \begin{pmatrix}
				1 \\ 0 \\ 0 
			\end{pmatrix}\!, \
			\rho_7 = - \begin{pmatrix}
				0 \\ 1 \\ 0 
			\end{pmatrix}\!, \
			\rho_8 = - \begin{pmatrix}
				0 \\ 0 \\ 1
			\end{pmatrix}\!, \
			\rho_9 = - \begin{pmatrix}
				1 \\ 1 \\ 0
			\end{pmatrix}\!, \
			\rho_{10} = - \begin{pmatrix}
				1 \\ 0 \\ 1
			\end{pmatrix}\!.
		\end{gathered}
	\end{equation}
Let $X$ be the toric $3$-fold associated to the face fan of $P$. Then the following assertions hold.
		\begin{figure}
		\centering

\begin{tikzpicture}[scale=0.8,every node/.style={scale=0.9}]
\draw[gray!80, thick] (1,1) -- (-2,1);
\draw[gray!80, thick] (1,1) -- (-1,3);

\draw[gray!80, thick] (1,1) -- (3,0);
\draw[gray!80, thick] (-1,3) -- (-2,1);
\draw[gray!80, thick] (-3,0) -- (-2,1);
\draw[gray!80, thick] (1,1) -- (-1,-2);
\draw[gray!80, thick] (-1,-2) -- (-2,1);

\draw[black, very thick] (-3,0) -- (-1,3);
\draw[black, very thick] (-3,0) -- (-1,-1);
\draw[black, very thick] (-1,-1) -- (1,2);
\draw[black, very thick] (1,2) -- (3,0);
\draw[black, very thick] (1,2) -- (2,-1);
\draw[black, very thick] (-1,-1) -- (2,-1);
\draw[black, very thick] (2,-1) -- (3,0);
\draw[black, very thick] (1,2) -- (-1,3);
\draw[black, very thick] (3,0) -- (1,-3);
\draw[black, very thick] (2,-1) -- (1,-3);
\draw[black, very thick] (-1,-2) -- (1,-3);
\draw[black, very thick] (-1,-2) -- (-3,0);
\draw[black, very thick] (-1,-1) -- (1,-3);

\draw[gray!80, thick] (0.1,0.1) -- (-0.1,-0.1);
\draw[gray!80, thick] (-0.1,0.1) -- (0.1,-0.1);

\filldraw[gray] (0,0) circle (0pt) node[anchor=north] {\textcolor{black}{$0$}};
\filldraw[black] (2,-1) circle (1pt) node[anchor=south east] {$\rho_1$};
\filldraw[black] (3,0) circle (1pt) node[anchor=north west] {$\rho_4$};
\filldraw[black] (1,1) circle (1pt) node[anchor=north] {$\rho_2$};
\filldraw[black] (-2,1) circle (1pt) node[anchor=north west] {$\rho_6$};
\filldraw[black] (-3,0) circle (1pt) node[anchor=east] {$\rho_9$};
\filldraw[black] (1,-3) circle (1pt) node[anchor=north] {$\rho_8$};
\filldraw[black] (-1,-2) circle (1pt) node[anchor=north east] {$\rho_{10}$};
\filldraw[black] (1,2) circle (1pt) node[anchor=south west] {$\rho_5$};
\filldraw[black] (-1,3) circle (1pt) node[anchor=south] {$\rho_3$};
\filldraw[black] (-1,-1) circle (1pt) node[anchor=north ] {$\rho_7$};

%
\end{tikzpicture}
		\caption{The polytope $P$ in Proposition~\ref{prop:definition_P_X}}
		\label{fig:P735}
	\end{figure}
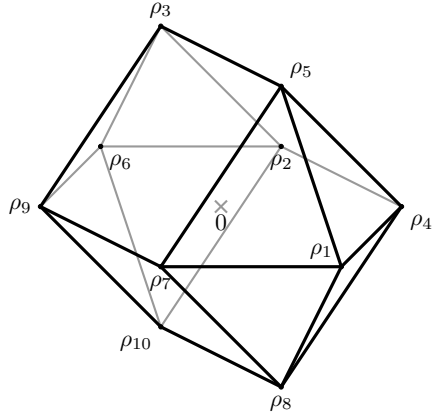
	\begin{enumerate}
		\item $X$ is a K-polystable Fano $3$-fold with anticanonical volume $28$.
		\item The singular locus of $X$ consists of $4$ ordinary double points.
		\item The base of the  miniversal deformation of $X$ is smooth of dimension $4$, i.e.\ $\Def{X}$ is isomorphic to the formal spectrum of the power series ring $\CC \pow{t_\al, t_\beta, t_\gamma, t_\delta}$.
		\item $X$ deforms to the $2$nd entry in the Mori--Mukai list \cite{MM82,MM03} of smooth Fano $3$-folds with Picard rank $4$ (the other invariants are $(-K)^3 = 28$ and $h^{1,2} = 1$).
		\item The automorphism group $\Aut(P) \subset \mathrm{GL}(N) \simeq \mathrm{GL}_3(\ZZ)$ of the polytope $P$ is isomorphic to the semidirect product $D_8 \rtimes C_2$, where $C_2$ is the cyclic group of order $2$ and $D_8$ is the dihedral group of order $8$; more precisely, $\Aut(P)$ is generated by the following matrices:
		\begin{align*}
			g_1 &= \begin{pmatrix}
			-1 & 1 & 1 \\ 0 & 1 & 0 \\ 0 & 0 & 1
		\end{pmatrix}
		\text{ of order } 2, \\
			g_2 &= \begin{pmatrix}
			1 & 0 & 0 \\ 0 & 0 & 1 \\ 0 & 1 & 0
			\end{pmatrix}
		\text{ of order } 2, \\
			g_3 &= \begin{pmatrix}
			1 & -1 & 0 \\ 0 & 0 & 1 \\ 0 & -1 & 0
		\end{pmatrix}
		\text{ of order } 4.
		\end{align*}
		\item The automorphism group $\Aut(X)$ of $X$ is isomorphic to $T_N \rtimes \Aut(P)$, where $T_N = N \otimes_\ZZ \CC^*$ is the $3$-dimensional algebraic torus whose cocharacter lattice is $N$.
		\item The formal action of $\Aut(X)$ on $\Def{X} \simeq \Spf \CC \pow{t_\al, t_\beta, t_\gamma, t_\delta}$ is given by:
		\begin{enumerate}[label=$\bullet$]
			\item the torus $T_N$ acts linearly and diagonally on $t_\al, t_\beta, t_\gamma, t_\delta$ with weights $(0,1,1)$, $(0,1,-1)$, $(0,-1,-1)$ $(0,-1,1)$ in $M$, respectively,
			\item the involution $g_2$ leaves $t_\al$ and $t_\gamma$ fixed and swaps $t_\beta$ and $t_\delta$,
			\item the involution $g_1$ fixes everything,
			\item $g_3$ acts as $t_\al \mapsto - t_\beta$, $t_\beta \mapsto t_\gamma$, $t_\gamma \mapsto t_\delta$, $t_\delta \mapsto - t_\al$.
		\end{enumerate}
		\item The scheme $\Kps{3}{28}$ is smooth of dimension $2$ in a neighbourhood of the point $[X]$ corresponding to $X$. More precisely, $t_\al t_\beta t_\gamma t_\delta$ and $t_\al t_\gamma - t_\beta t_\delta$ are regular formal parameters of $\Kps{3}{28}$ at $[X]$.
		\item In a neighbourhood of $[X]$ in $\Kps{3}{28}$ the locus of non-smooth K-polystable Fano varieties is a smooth curve passing through $[X]$.
	\end{enumerate}
\end{prop}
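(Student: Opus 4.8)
The plan is to read off the non-smooth locus directly from the deformation space $\Def{X}$ and the $\Aut(X)$-action recorded in (3) and (7), and then transport it to K-moduli via the local structure already used to prove (8). Recall that, \'etale-locally near $[X]$, the space $\Kps{3}{28}$ is the good moduli space of the quotient stack $[\Def{X}^{\mathrm{Kss}}/\Aut(X)]$, where $\Def{X}^{\mathrm{Kss}} \subseteq \Def{X} \simeq \Spf \CC\pow{t_\al,t_\beta,t_\gamma,t_\delta}$ denotes the K-semistable locus; in particular every K-polystable Fano near $X$ is a fibre $X_y$ for some $y$ with closed $\Aut(X)$-orbit, and any $\Aut(X)$-invariant function on $\Def{X}$ descends to a function on the germ of $\Kps{3}{28}$ at $[X]$. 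I would then (i) identify the non-smooth locus in $\Def{X}$, (ii) show it is the vanishing locus of an invariant function, and (iii) conclude smoothness from (8).

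For step (i): by (2) the variety $X$ has exactly four ordinary double points, and by (3) the base $\Def{X}$ is smooth of dimension $4$ with coordinates $t_\al,t_\beta,t_\gamma,t_\delta$. The miniversal deformation of a single ordinary double point $(xy = zw)$ is $(xy-zw = t)$, which is smooth exactly for $t \neq 0$; moreover by (7) the torus $T_N$ acts diagonally on the four coordinates with four \emph{distinct} weights. Since the connected torus cannot permute the nodes, it fixes each of them and acts on each local smoothing parameter through a single character, so the four coordinates are in bijection with the four nodes, each $t_{\bullet}$ smoothing precisely one node. Hence $X_y$ is smooth if and only if all four nodes are smoothed, that is if and only if $t_\al t_\beta t_\gamma t_\delta \neq 0$, so the non-smooth locus in $\Def{X}$ is the divisor $V(t_\al t_\beta t_\gamma t_\delta)$.

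For steps (ii)--(iii): the monomial $u := t_\al t_\beta t_\gamma t_\delta$ is $\Aut(X)$-invariant, being fixed by $g_1$, by $T_N$ (the four weights sum to zero), and by $g_2$ (which swaps $t_\beta$ and $t_\delta$), while under $g_3$ one computes $u \mapsto (-t_\beta)t_\gamma t_\delta(-t_\al) = u$. Therefore $u$ descends to a function $\bar u$ on $\Kps{3}{28}$ near $[X]$, and by the previous paragraph the non-smooth locus there is exactly $V(\bar u)$. By (8), however, $u = t_\al t_\beta t_\gamma t_\delta$ is one of the two regular formal parameters of the smooth surface $\Kps{3}{28}$ at $[X]$; hence $V(\bar u)$ is a smooth divisor in a smooth surface, i.e.\ a smooth curve through $[X]$, which is the assertion.

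The main obstacle is step (i): one must pin down that each deformation coordinate smooths a single node, so that the non-smooth locus is precisely $V(t_\al t_\beta t_\gamma t_\delta)$ and not some larger or reducible subscheme. It is also worth noting what makes the conclusion work: upstairs $V(t_\al t_\beta t_\gamma t_\delta)$ is the \emph{singular} union of four coordinate hyperplanes, but the $\Aut(X)$-action — in particular $g_3$, which cyclically permutes the coordinates up to sign — identifies these four branches, so that their common image is the single smooth curve $V(\bar u)$. Given (8) the remaining steps are formal: the invariance computation is immediate, and the smoothness of $V(\bar u)$ follows at once because $u$ is a regular parameter.
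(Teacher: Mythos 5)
Your proposal proves only assertion (9), taking (1)--(8) as given; as a proof of the full nine-part proposition it is therefore incomplete, though this is a reasonable reading, since the paper's own proof of (9) likewise rests on (2), (3), (7) and (8). For (9) itself your argument is correct and essentially the paper's: the paper also identifies the discriminant in $\Def{X}$ as $\{t_\al t_\beta t_\gamma t_\delta = 0\}$ and observes that its image under $(t_\al, t_\beta, t_\gamma, t_\delta) \mapsto (t_\al t_\beta t_\gamma t_\delta,\, t_\al t_\gamma - t_\beta t_\delta)$ is the germ of a smooth curve where the first coordinate vanishes, which is exactly your $V(\bar u)$. The only real difference is presentational: the paper gets the bijection between coordinates and nodes for free from its convention that each $t_\bullet$ is, by construction, the smoothing parameter of one node via the localisation map $\Def{X} \to \Def{U_\al} \times \Def{U_\beta} \times \Def{U_\gamma} \times \Def{U_\delta}$, whereas you recover it from the distinct torus weights in (7) --- a fine substitute, but note that it still tacitly uses that localisation (i.e.\ that nearby fibres can only be singular near the four nodes and that the restrictions of the miniversal family are versal at each node), which is where $\rH^1(T_X) = \rH^2(T_X) = 0$ enters in the paper.
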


The proof of (1--4) is contained in  \cite[\S5.1]{ask_petracci}; here we recap the argument briefly.

\begin{proof}
	The polytope $P$ has $20$ edges  (all of them have lattice length $1$) and has $12$ $2$-dimensional faces ($8$ of them are triangles and $4$ of them are quadrilaterals). It is depicted in Figure~\ref{fig:P735}.
	Let $\Sigma_X$ be the face fan of $P$.
	The $3$-dimensional cones of $\Sigma_X$ are
	\begin{equation}
		\begin{gathered}
			\sigma_{145}, \ \sigma_{157}, \ \sigma_{148}, \ \sigma_{178}, \\
			\sigma_{6910}, \ \sigma_{2610}, \ \sigma_{369}, \ \sigma_{236}, \\
			\sigma_{2345}, \ \sigma_{24810}, \
			\sigma_{78910}, \  \sigma_{3579}.
		\end{gathered}
	\end{equation}
Here $\sigma_{ijk}$ (resp.\ $\sigma_{ijkl}$) is the $3$-dimensional cone in $N$ whose rays are $\rho_i$, $\rho_j$, $\rho_k$ (resp.\ $\rho_i$, $\rho_j$, $\rho_k$, $\rho_l$).
	$X$ is the toric variety associated to $\Sigma_X$.
	
	\medskip
	
	(1) $X$ is Fano, because it is defined by the face fan of a Fano polytope.
Let $P^\circ$ be the polar of $P$: it is the moment polytope of the toric boundary of $X$.
	Since $P^\circ$ has normalised volume $28$, we have $(-K_X)^3 = 28$.
	Since $P = -P$ we have that the barycentre of $P^\circ$ is the origin, therefore $X$ is K-polystable by \cite[Corollary 1.2]{berman_polystability} and \cite[Corollary~7.17]{BlumJonsson}.
	
	\medskip
	
	(2)	The simplicial cones of $\Sigma_X$ are smooth.
	The non-simplicial cones of $\Sigma_X$ are $\sigma_\al = \sigma_{2345}$, $\sigma_\beta = \sigma_{24810}$, $\sigma_\gamma = \sigma_{78910}$, $\sigma_\delta = \sigma_{3579}$.
	Let $U_\al$ (resp.\ $U_\beta$, $U_\gamma$, $U_\delta$) be the toric open affine subscheme of $X$  associated to the cone $\sigma_\al$ (resp.\ $\sigma_\beta$, $\sigma_\gamma$, $\sigma_\delta$), i.e.\ $U_\al = \Spec \CC[\sigma^\vee \cap M]$.
	
	Each of the cones $\sigma_\al$, $\sigma_\beta$, $\sigma_\gamma$, $\sigma_\delta$ is $\GL_3(\ZZ)$-equivalent to the cone over a unit square placed at height $1$, i.e.\ to the cone spanned by $e_3, e_1+e_3$, $e_2 + e_3$, $e_1 + e_2 + e_3$ in $\ZZ^3$ where $e_1, e_2, e_3$ is the standard basis of $\ZZ^3$.
	Therefore $U_\al$, $U_\beta$, $U_\gamma$, $U_\delta$ are all isomorphic to $\Spec \CC[x,y,z,w] / (xy-zw)$, which has an ordinary double point. Hence the singular locus of $X$ is made up of $4$ ordinary double points.
	
	More precisely, we fix an isomorphism between $U_\al$ (resp.\ $U_\beta$, $U_\gamma$, $U_\delta$)  and $\Spec \CC[x,y,z,w] / (xy-zw)$ by giving names to the minimal generators of the monoid $\sigma_\al^\vee \cap M$ (resp.\ $\sigma_\beta^\vee \cap M$, $\sigma_\gamma^\vee \cap M$, $\sigma_\delta^\vee \cap M$):
	\begin{equation*}
		\begin{gathered}
					x_\al = (-1,1,1) \quad y_\al = (1,0,0) \quad z_\al = (0,0,1) \quad w_\al = (0,1,0), \\
			x_\beta = (0,0,-1) \quad y_\beta = (0,1,0) \quad z_\beta = (-1,1,0) \quad w_\beta = (1,0,-1), \\
			x_\gamma = (0,-1,0) \quad y_\gamma = (0,0,-1) \quad z_\gamma = (-1,0,0) \quad w_\gamma = (1,-1,-1), \\
			x_\delta = (0,0,1) \quad y_\delta = (0,-1,0) \quad z_\delta = (-1,0,1) \quad w_\delta = (1,-1,0).
		\end{gathered}
	\end{equation*}

\medskip
	
	(3) Since $\rH^1(T_X) = 0$ and $\rH^2(T_X) = 0$ \cite[Lemma 4.4]{petracci_survey} and $X$ has isolated singularities, the deformations of $X$ can be identified with the deformations of the singularities of $X$ (see for instance \cite[\S14.2.1]{petracci_survey}).
	More precisely,  the product of the restriction maps
	\begin{equation} \label{eq:restriction_map_def_functors}
		\Def{X} \longrightarrow \Def{U_\al} \times \Def{U_\beta} \times \Def{U_\gamma} \times \Def{U_\delta}
	\end{equation}
 is smooth and induces an isomorphism on tangent spaces.
	Since an ordinary double point has a smooth $1$-dimensional miniversal deformation space, we conclude.
	
For future use we need to specify the meaning of $t_\al$, $t_\beta$, $t_\gamma$, $t_\delta$. We will always use the smooth functor
\begin{equation*}
	\Spf \CC \pow{t_\al} \longrightarrow \Def{U_\al},
\end{equation*}
	which induces an isomorphism on tangent spaces and is induced by the formal deformation of $U_\al$ over $\Spf \CC \pow{t_\al}$ given by the equation
\begin{equation} \label{eq:meaning_of_t_alpha}
	x_\al y_\al - z_\al w_\al + t_\al = 0,
\end{equation}
where the meaning of $x_\al$, $y_\al$, $z_\al$, $w_\al$ is specified in the proof of (2) above.
We make the similar conventions for $t_\beta$, $t_\gamma$ and $t_\delta$ and we will always stick to these conventions.
	
	\medskip
	
	(4) By \cite[Proposition~2.5]{ask_petracci} the $2$nd Betti number of $X$ is $4$. Since the smoothing of a $3$-fold ordinary double point has a Milnor fibre homotopically equivalent to the $3$-sphere, the $2$nd Betti number of the general smoothing of $X$ is $4$.
	Moreover, in smoothing $X$, also the anticanonical degree is preserved.
	In the Mori--Mukai list \cite{MM82, MM03} there is only one family of smooth Fano $3$-folds with Picard rank $4$ and anticanonical degree $28$.
	
	\medskip
	
	(5) We now compute the automorphism group of the polytope $P$. Any automorphism of $P$ should preserve the $\rho_1$-$\rho_6$ axis. There is only one reflection that switches $\rho_1$ and $\rho_6$, and we choose it as the generator of a $C_2$-subgroup of $\Aut(P)$. As a matrix in $\GL(N) = \GL_3(\ZZ)$ the generator of the $C_2$ above is 
	\[g_1=\begin{pmatrix}
	-1 & 1& 1 \\
 	0 & 1 & 0 \\
	0 & 0 & 1
	\end{pmatrix}.\]
All other automorphisms of $P$ fix $\rho_1$. They must also preserve the square obtained by intersecting $H$ with $P$, where $H = \RR \frac{\rho_2 + \rho_4}{2} + \RR \frac{\rho_3 + \rho_5}{2}$. This square has symmetry group $D_8$ and all of its symmetries lift uniquely to symmetries of $P$ that fix $\rho_1$. Moreover, these do not commute with the generator of $C_2$, thus $\Aut(P)= D_8 \rtimes C_2$. The matrix representations of the two generators of $D_8$ are:
	\[g_2=\begin{pmatrix}
	1 & 0& 0 \\
	0 & 0 & 1\\
	0 & 1 &0
	\end{pmatrix} \quad \text{ and } \quad
	g_3=\begin{pmatrix}
	1 & -1 & 0 \\
 	0 & 0 & 1 \\
	0 & -1 & 0
	\end{pmatrix}.
	\] 
	
	\medskip
	
	(6) This follows from \cite[Proposition~2.8]{ask_petracci} because no facet of the polar polytope $P^\circ$ has interior lattice points.
	
	\medskip
	
	(7) 	The vector space $\TT^1_X$, which is the tangent space of $\Def{X}$, is a $4$-dimensional representation of $\Aut(X) = T_N \rtimes \Aut(P)$.
	Using \eqref{eq:restriction_map_def_functors} we identify $\TT^1_X$ with the tangent space to the deformations of the $4$ singular points of $X$.
	
	The vector $(0,1,1) \in M$ is the only one such that by pairing with the primitive generators of the cone $\sigma_\al$, namely $\rho_2$, $\rho_3$, $\rho_4$, $\rho_5$, we obtain $1$. In other words, $-(0,1,1) \in M$ is the vertex of $P^\circ$ which is dual to the face of $P$ whose vertices are $\rho_2$, $\rho_3$, $\rho_4$, $\rho_5$.
	By \cite{altmann_t1} $\TT^1_{U_\al}$ is the $1$-dimensional representation of the torus $T_N = N \otimes_\ZZ \CC^*$ with weight $(0,-1,-1)$. This implies that the coordinate $t_\al$ has degree $(0,-1,-1) \in M$ with respect to $T_N$.
	
	In similar ways one proves that $t_\beta$, $t_\gamma$, $t_\delta$ have degrees $(0,-1,1)$, $(0,1,1)$, $(0,1,-1)$ respectively. We now focus on the action of the $3$ generators $g_1$, $g_2$, $g_3$ of $\pi_0(\Aut(X))$ given in (5).
	
	\smallskip
	
	The involution $g_1$ acts on the vertices of $P$ by $\rho_1 \leftrightarrow \rho_6$,  
	$\rho_2 \leftrightarrow \rho_4$,
		$\rho_3 \leftrightarrow \rho_5$,
		$\rho_7 \leftrightarrow \rho_9$,
			$\rho_8 \leftrightarrow \rho_{10}$.
	In particular, by remembering that the torus orbits on $X$ correspond bijectively to the cones of the face fan of $P$, we can see that $U_\al$ is $g_1$-invariant and $g_1$ acts via pull-back on the regular functions of $U_\al$. For example
	\[
	g_1(x_\al) = \begin{pmatrix}
		-1 & 1 & 1
	\end{pmatrix}
\begin{pmatrix}
	-1 & 1 & 1 \\
	0 & 0 & 1 \\
	0 & 1 & 0
\end{pmatrix}
=
\begin{pmatrix}
	1 & 0 & 0
\end{pmatrix}
= y_\al
	\]
	and
		\[
	g_1(z_\al) = \begin{pmatrix}
		0 & 0 & 1
	\end{pmatrix}
	\begin{pmatrix}
		-1 & 1 & 1 \\
		0 & 0 & 1 \\
		0 & 1 & 0
	\end{pmatrix}
	=
	\begin{pmatrix}
		0 & 1 & 0
	\end{pmatrix}
	= w_\al.
	\]
	This shows that $g_1$ maps the miniversal deformation \eqref{eq:meaning_of_t_alpha} of $U_\al$ to itself. Hence $g_1$ maps $t_\al$ to itself.
	In a similar way, one can show that $U_\beta$, $U_\gamma$ and $U_\delta$ are $g_1$-invariant and $g_1$ fixes $t_\beta$, $t_\gamma$ and $t_\delta$ too.
	
	\smallskip
	
	Let us now focus on $g_2$. On the vertices of $P$ this involution acts as $\rho_2 \leftrightarrow \rho_3$, $\rho_4 \leftrightarrow \rho_5$, $\rho_7 \leftrightarrow \rho_8$, $\rho_9 \leftrightarrow \rho_{10}$ and fixes $\rho_1$ and $\rho_6$.
	One can see that $U_\al$ is $g_2$-invariant and $g_2(x_\al) = x_\al$, $g_2(y_\al) = y_\al$, $g_2(z_\al) = w_\al$. Therefore $g_2$ maps the miniversal deformation \eqref{eq:meaning_of_t_alpha} of $U_\al$ to itself, hence $t_\al$ is fixed by $g_2$.
	
	One sees that $g_2(U_\beta) = U_\delta$. So, via pull-back, $g_2$ maps a regular function on $U_\delta$ to a regular function on $U_\beta$.
	A similar calculation as above gives $g_2(y_\delta)=x_\beta$, $g_2(x_\delta) = y_\beta$, $g_2(z_\delta) = z_\beta$, $g_2(w_\delta) = w_\beta$.
	So the miniversal deformation $x_\delta y_\delta - z_\delta w_\delta + t_\delta = 0$ of $U_\delta$ is mapped by $g_2$ to 
	the deformation $x_\beta y_\beta - z_\beta w_\beta + t_\delta = 0$ of $U_\beta$. This latter deformation is induced by the chosen miniversal deformation $x_\beta y_\beta - z_\beta w_\beta + t_\beta = 0$ of $U_\beta$ by setting $t_\delta = t_\beta$. Therefore $g_2$ swaps $t_\delta$ and $t_\beta$.
	
	In a similar way, one can show that $g_2$ leaves $t_\gamma$ fixed.

	\smallskip
	
	Let us now focus on $g_3$.
	The order $4$ matrix $g_3$ acts as follows on the vertices of $P$: $\rho_1$ and $\rho_6$ are fixed and there are two orbits of cardinality $4$, namely $\rho_2 \mapsto \rho_{10} \mapsto \rho_9 \mapsto \rho_3 \mapsto \rho_2$ and $\rho_5 \mapsto \rho_4 \mapsto \rho_8 \mapsto \rho_7 \mapsto \rho_5$.
	
	One sees $g_3(U_\al) = U_\beta$.
	Via pull-back along $g_3$, we get $x_\beta \mapsto w_\al$, $y_\beta \mapsto z_\al$, $z_\beta \mapsto x_\al$, $w_\beta \mapsto y_\al$.
	So the miniversal deformation $x_\beta y_\beta - z_\beta w_\beta + t_\beta = 0$ of $U_\beta$ is mapped via $g_3$ to the deformation $w_\al z_\al - x_\al y_\al + t_\beta = 0$ of $U_\al$.
	This shows that $g_3$ maps $t_\al$ to $- t_\beta$.
	
	In a similar way, one obtains $g_3(U_\beta) = U_\gamma$, $g_3(U_\gamma) = U_\delta$, $g_3(U_\delta) = U_\al$ and that $g_3$ acts as $t_\beta \mapsto t_\gamma \mapsto t_\delta \mapsto - t_\al$.

	\medskip
	
	(8) By (7) the invariant subring of the formal action of the torus $T_N \simeq (\CC^*)^3$ on $\CC \pow{t_\al, t_\beta, t_\gamma, t_\delta}$ is  $\CC \pow{t_\al t_\gamma, t_\beta t_\delta}$, which is a power series ring in $2$ indeterminates.
	The involutions $g_1$ and $g_2$ act trivially on $\CC \pow{t_\al t_\gamma, t_\beta t_\delta}$, whereas $g_3$ acts on $\CC \pow{t_\al t_\gamma, t_\beta t_\delta}$ with order $2$ and maps $t_\al t_\gamma$ to $- t_\beta t_\delta$.
	In conclusion, the subring of $\CC \pow{t_\al, t_\beta, t_\gamma, t_\delta}$ of the invariants elements under the action of $\Aut(X)$ is $\CC \pow{t_\al t_\beta t_\gamma t_\delta, t_\al t_\gamma - t_\beta t_\delta}$, which is a power series ring in $2$ indeterminates.
	
	By the Luna  \'etale slice theorem for algebraic stacks \cite{luna_etale_slice_stacks} the local structure of $\Kss{3}{28} \to \Kps{3}{28}$ at the point $[X]$ is given by the  commutative square
	\begin{equation*}
		\xymatrix{
			\left[ \Spf \CC \pow{t_\al, t_\beta, t_\gamma, t_\delta} \ / \ \Aut(X)  \right] \ar[d] \ar[r] & \Kss{3}{28}  \ar[d] \\
			\Spf \CC \pow{t_\al t_\beta t_\gamma t_\delta, t_\al t_\gamma - t_\beta t_\delta} \ar[r] &  \Kps{3}{28}
		}
	\end{equation*}
	where the horizontal maps are formally \'etale and maps the closed point to $[X]$.
	This implies that $\Kps{3}{28}$ is smooth of dimension $2$ near $[X]$ and that $t_\al t_\beta t_\gamma t_\delta$, $t_\al t_\gamma - t_\beta t_\delta$ are local analytic coordinates.

	\medskip
	
	(9) Here, to simplify, we work in the analytic category.
	Since $t_\al, t_\beta, t_\gamma, t_\delta$ are the smoothing parameters of an ordinary double point, it is clear that in the miniversal deformation of $X$, which now we think over a small polydisc $\Delta^4$ of dimension $4$ (with local analytic coordinates $t_\al, t_\beta, t_\gamma, t_\delta$), the smooth fibres are exactly those where $t_\al \neq 0$, $t_\beta \neq 0$, $t_\gamma \neq 0$, $t_\delta \neq 0$.
	Let $D$ be the discriminant locus, i.e.\ the locus in the base space $\Delta^4$ of the miniversal deformation of $X$ where the fibres are singular; hence $D = \{ t_\al t_\beta t_\gamma t_\delta = 0  \} \subset \Delta^4$ is the union of $4$ hyperplanes.
	
	Let $\Delta^2$ be a small $2$-dimensional polydisc which gives the local structure of $\Kps{3}{28}$ at $[X]$.
	The holomorphic map $\Delta^4 \to \Delta^2$ is given by
	\[
	(t_\al, t_\beta, t_\gamma, t_\delta) \mapsto (t_\al t_\beta t_\gamma t_\delta, t_\al t_\gamma - t_\beta t_\delta).
	\]
	The image of $D$ via this map is the locus in $\Delta^2$ where the first coordinate is zero, therefore it is a germ of a smooth curve passing through the origin.
	The image of $D$ is exactly the locus in $\Delta^2$ of singular K-polystable Fanos close to $[X]$.
\end{proof}

An immediate consequence of Proposition~\ref{prop:definition_P_X} is:

\begin{cor} \label{cor:almost_everything_without_rationality}
	Let $M = \Kps{3}{28}$ be the K-moduli space of K-polystable Fano $3$-folds with anticanonical volume $28$.
	Let $\modulismooth \subset M$ be the locus which parametrises the smooth K-polystable Fano $3$-folds with anticanonical volume $28$ and Picard rank $4$.
	Then there exists an open subscheme $U$ of $M$ such that $U$ is a smooth surface and $U \setminus \modulismooth$ is a smooth curve.
\end{cor}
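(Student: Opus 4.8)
The plan is to take $U$ to be a sufficiently small Zariski-open neighbourhood of the point $[X] \in M$ and to read off both claimed properties directly from Proposition~\ref{prop:definition_P_X}. First I would invoke part~(8): since $\Kps{3}{28}$ is smooth of dimension $2$ in a neighbourhood of $[X]$, there is an open subscheme $U \ni [X]$ of $M$ that is a smooth surface. This settles the first assertion, and it only remains to understand the complement $U \setminus \modulismooth$.

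The key step is to identify, after possibly shrinking $U$, the set $\modulismooth \cap U$ with the locus in $U$ parametrising \emph{smooth} K-polystable Fano $3$-folds. The inclusion $\modulismooth \cap U \subset \{\text{smooth}\}$ holds by definition. For the reverse inclusion I would argue that every smooth K-polystable Fano parametrised by a point of $U$ near $[X]$ belongs to the family \textnumero4.2. By part~(4) the variety $X$ deforms to \textnumero4.2, and in the analytic miniversal picture of part~(9) the smooth fibres lie over the connected locus $\{t_\al t_\beta t_\gamma t_\delta \neq 0\} \subset \Delta^4$; hence they are all deformation-equivalent and so all have Picard rank $4$ and anticanonical volume $28$. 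Thus every smooth Fano parametrised near $[X]$ lies in $\modulismooth$, and the two loci coincide on a small enough $U$.

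With this identification in place, $U \setminus \modulismooth$ is exactly the locus in $U$ of \emph{non-smooth} K-polystable Fano varieties. By part~(9), in a neighbourhood of $[X]$ this locus is a smooth curve through $[X]$ (the image of the discriminant $\{t_\al t_\beta t_\gamma t_\delta = 0\}$ under the local map to $\Delta^2$, cut out by the vanishing of the first coordinate). After shrinking $U$ to sit inside this neighbourhood, $U \setminus \modulismooth$ is a smooth curve, which is the second assertion. Note that the smooth locus $U \setminus (U \setminus \modulismooth)$ is then automatically nonempty, so $\modulismooth \cap U \neq \emptyset$, although the present statement does not require this.

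The one point requiring genuine care, rather than being purely tautological, is the reverse inclusion in the second paragraph: one must rule out smooth K-polystable Fanos near $[X]$ that lie outside \textnumero4.2. This is what forces the statement to be local and the neighbourhood $U$ to be shrunk around $[X]$, and it is handled by the connectedness of the smooth locus of the miniversal base together with part~(4). Everything else is a direct transcription of parts~(8) and~(9), which is why the corollary is immediate.
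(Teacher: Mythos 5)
Your proposal is correct and follows essentially the same route as the paper, which derives the corollary as an immediate consequence of Proposition~\ref{prop:definition_P_X}, using part~(8) for the smooth surface $U$ and part~(9) for the smooth curve $U \setminus \modulismooth$. Your second paragraph merely makes explicit a point the paper leaves implicit—that near $[X]$ every smooth K-polystable Fano lies in the family \textnumero4.2, so that $U \setminus \modulismooth$ coincides with the non-smooth locus of part~(9)—and your connectedness-of-the-smooth-fibres argument is a valid way to justify it.
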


This is a portion of Theorem~\ref{thm:main}; the only things which are missing are the statements about the rationality of $U$ and of $U \setminus \modulismooth$.

\section{Embeddings in toric varieties}

Now we realise the toric Fano $3$-fold $X$ of Proposition~\ref{prop:definition_P_X} inside a toric Fano $4$-fold.

\begin{prop} \label{prop:X_inside_F}
	In the lattice $N_F = \ZZ^4$ consider the lattice polytope $P_F$ whose vertices are the columns $r_1, \dots, r_7 \in N_F$ of the matrix
	\[
	R_F=
	\begin{bmatrix}
		1 & 0 & -1 & 0 & 0 & 0 & 0 \\
		0 & 1 & 0 & -1 & 0 & 0 & 0 \\
		0 & 0 & -1 & -1 & 1 & -1 & 0 \\
		0 & 0 & -1 & -1 & 0 & -1 & 1
	\end{bmatrix}.
	\]
	Let $F$ be the toric variety associated to the face fan of $P_F$.
	Then:
	\begin{enumerate}
		\item $F$ is a smooth Fano $4$-fold and is the GIT quotient $\AA^7 \git (\CC^*)^3$ given by the weight matrix 
		\[\begin{matrix}
			u_1& u_2& u_3& u_4&u_5&u_6&u_7\\
			\hline 
1 & 0 & 1 & 0 & 0 & -1 & 0 \\
0 & 1 & 0 & 1 & 0 & -1 & 0 \\
0 & 0 & 0 & 0 & 1 & 1 & 1
		\end{matrix}\]
	and by the stability condition $(1,1,1)$;
	\item if $X$ is the toric Fano $3$-fold considered in Proposition~\ref{prop:definition_P_X}, then the linear map $N_X = \ZZ^3 \to N_F = \ZZ^4$ induced by the matrix
	\[
	A = \begin{bmatrix}
		0 & 1 & 0 \\
		0 & 0 & 1 \\
		-1 & 1 & 1 \\
		1 & 0 & 0
	\end{bmatrix}
	\]
induces a toric morphism $X \to F$ which is a closed embedding and identifies $X$ with the divisor in $F$ defined by the equation
\begin{equation*}
u_5 u_7 - u_6^2 u_1 u_2 u_3 u_4 = 0
\end{equation*}
in the Cox coordinates of $F$.
	\end{enumerate}
\end{prop}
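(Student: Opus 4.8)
The plan is to establish the two assertions separately, throughout using the standard dictionary relating a toric variety, its Cox ring, and the associated GIT quotient.

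For part (1) I would begin by listing the facets of $P_F$. Since the claim is that $F$ is smooth Fano, each facet should be a simplex whose vertices form a $\ZZ$-basis of $N_F$, so I would verify that the corresponding maximal minors of $R_F$ are $\pm 1$; this simultaneously shows that every maximal cone of the face fan $\Sigma_F$ is smooth and that $P_F$ is a smooth Fano polytope (the origin is interior and the vertices are primitive), whence $-K_F = \sum_j D_j$ is Cartier and ample and $F$ is a smooth Fano $4$-fold. For the quotient presentation I would write down the divisor sequence $0 \to M_F \to \ZZ^7 \to \Cl(F) \to 0$, whose left map $m \mapsto (\langle m, r_j\rangle)_j$ is encoded by $R_F^{\mathsf T}$. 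One checks directly that the displayed $3\times 7$ weight matrix $Q$ satisfies $Q\,R_F^{\mathsf T} = 0$ and is surjective, so that $\Cl(F)\cong\ZZ^3$, the Picard rank is $3$, and $G = \Hom(\Cl(F),\Gm)\cong(\Gm)^3$ acts on $\AA^7$ with the stated weights. It then remains to check that the linearisation $\theta=(1,1,1)$ determines the irrelevant ideal generated by the monomials $\prod_{\rho_j\notin\sigma} u_j$ over the maximal cones $\sigma$ of $\Sigma_F$, so that $\AA^7 \git (\Gm)^3$ recovers exactly $F$.

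For part (2) the conceptual point is that the image of a toric morphism is the closure of the image subtorus, and such a closure need not be torus-invariant: it can be a non-monomial hypersurface. First I would observe that the two monomials of $f = u_5 u_7 - u_6^2 u_1 u_2 u_3 u_4$ have the same class $(0,0,2)\in\Cl(F)$, read off from the columns of $Q$, so $f$ is homogeneous and $V(f)$ is a genuine divisor in $F$. On the dense torus the equation $f = 0$ becomes $\chi^{m_0} = 1$, where $m_0 = (-1,-1,1,1)\in M_F$ is the character with $\langle m_0, r_j\rangle$ equal to the exponent of $u_j$ in $u_5 u_7 /(u_6^2 u_1 u_2 u_3 u_4)$. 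A direct computation gives $A^{\mathsf T} m_0 = 0$, so the image of $A$ lies in $m_0^{\perp}$; moreover the three columns of $A$ are a $\ZZ$-basis of the saturated sublattice $m_0^{\perp}\cap N_F$. Hence $A$ identifies $N_X$ with $m_0^{\perp}\cap N_F$, and the induced homomorphism of tori $T_{N_X}\to T_{N_F}$ is a closed immersion onto the subtorus $\ker(\chi^{m_0}) = V(f)\cap T_{N_F}$.

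It then remains to upgrade this to an isomorphism $X \xrightarrow{\sim} V(f)$. To obtain the morphism $\phi\colon X\to F$ I must check that $A$ is compatible with the fans, i.e.\ that $A$ carries each cone of $\Sigma_X$ into a cone of $\Sigma_F$; together with the previous paragraph this realises $X = \overline{T_{N_X}}$ as mapping to the closure $\overline{V(f)\cap T_{N_F}}$ in $F$. The step I expect to be the main obstacle—and the one most naturally delegated to the computer algebra already invoked in the paper—is verifying that $A$ induces a bijection between $\Sigma_X$ and the fan that $\Sigma_F$ induces on the hyperplane $\{\, n\in N_F\otimes\RR : \langle m_0,n\rangle = 0 \,\}$, since this is precisely what guarantees that $\phi$ is a closed immersion rather than merely generically injective. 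Granting this, $\phi(X)$ is an integral closed subvariety of dimension $3$ whose ideal contains $f$; as $u_5 u_7$ is squarefree and coprime to $u_6^2 u_1 u_2 u_3 u_4$, the binomial $f$ is irreducible, so $V(f)$ is integral of dimension $3$ as well, and therefore $\phi(X) = V(f)$ scheme-theoretically, as claimed.
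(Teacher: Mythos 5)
Your part (1) and the first half of part (2) essentially reproduce the paper's argument: the paper likewise reads off the weight matrix from the divisor sequence (fixing the chamber via $-K_F$, whose weight $(1,1,3)$ lies in the positive orthant together with $(1,1,1)$), checks smoothness via the minors of $R_F$, verifies that $A$ sends each cone of $\Sigma_X$ into a cone of $\Sigma_F$ with image fan equal to $\Sigma_F \cap m_X^\perp$, and then identifies the equation --- the paper does this by computing the pull-back on Cox coordinates (e.g.\ $u_5 \mapsto x_2x_3x_6^2x_9x_{10}$, $u_7 \mapsto x_1^2x_4x_5x_7x_8$) rather than by your character computation on the dense torus, but both routes are sound.

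The genuine gap is your final claim that the bijection between $\Sigma_X$ and the fan induced by $\Sigma_F$ on $m_0^\perp$ ``is precisely what guarantees that $\phi$ is a closed immersion''. That criterion is false: being a closed immersion is not detected by the combinatorics of cones together with saturation of the sublattice. Counterexample: take $N' = \ZZ \hookrightarrow N = \ZZ^2$ via $1 \mapsto (2,3)$ (a primitive generator, so the sublattice is saturated), with $\Sigma'$ the fan of $\AA^1$ and $\Sigma$ the fan of $\AA^2$. The map of tori is a closed immersion, and the unique ray of $\Sigma'$ maps bijectively onto the induced fan $\{0,\ \RR_{\geq 0}(2,3)\}$ on $N'_\RR$; yet the toric morphism is $t \mapsto (t^2,t^3)$, which is injective and proper but ramified at the origin, with image the cuspidal cubic --- not a closed immersion. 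The missing condition is infinitesimal: for each cone $\sigma' \in \Sigma_X$ mapping into $\sigma \in \Sigma_F$, the dual monoid homomorphism $\sigma^\vee \cap M_F \to (\sigma')^\vee \cap M_X$ must be surjective, i.e.\ each chart map $U_{\sigma'} \to U_{\sigma}$ must itself be a closed immersion. This is exactly the check the paper performs (``locally by analysing the affine charts''); for instance on the chart of $\sigma_\alpha$ the map is $u_1 \mapsto w_\alpha$, $u_2 \mapsto z_\alpha$, $u_5 \mapsto x_\alpha$, $u_7 \mapsto y_\alpha$, which is visibly surjective. Alternatively you could salvage your argument by proving that the hypersurface $V(f)$ is \emph{normal} and then applying Zariski's main theorem to the proper, bijective, birational morphism $X \to V(f)$; but that normality statement would itself require proof, so it is not a free substitute for the chart-by-chart verification.
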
	
\begin{remark}
The matrix $A$ gives an injective homomorphism of algebraic tori $(\CC^*)^3 \into (\CC^*)^4$ and $X$ is the closure in $F$ of the orbit of $(1,1,1,1)$ under the action of $(\CC^*)^3$ induced by $A$.
\end{remark}

\begin{remark}
The toric variety $F$ in Proposition~\ref{prop:X_inside_F} is a smooth Fano $4$-fold with ID 97 in \cite{GRDB-toricsmooth}.
\end{remark}

\begin{remark}
The weight matrix of $F$ and the line bundle
\[
L=~\mathcal{O}_F\begin{pmatrix}0\\0\\2\end{pmatrix}
\]
 of which the equation of $X$ is a section of have previously appeared in \cite[\S 87]{quantum_periods}. We refer to the analysis there justifying why the Picard rank of $X$ is $4$, whereas the Picard rank of its ambient space $F$ is $3$.
\end{remark}

\begin{proof}[Proof of Proposition~\ref{prop:X_inside_F}]
(1) The divisor sequence \cite{cox_little_schenck}[Theorem 4.1.3] of $F$ is
\begin{equation*}
	0 \longrightarrow M_F \simeq \ZZ^4 \xrightarrow{(R_F)^T}  \Div_T(F) \simeq \ZZ^7 \overset{D_F}{\longrightarrow}  \Pic(F) \simeq \ZZ^3 \longrightarrow 0
\end{equation*}
where $D_F$ is the $3 \times 7$ matrix in the statement (1).

Thus $D_F$ is the weight matrix for $F$; in other words, the action of the torus $(\CC^*)^3$ on $\AA^7$ with weights given by $D_F$ is such that $F$ is the corresponding GIT quotient $\AA^7 \git (\CC^*)^3$ with respect to a certain stability condition.
Since $F$ is Fano and $-K_F$ is the $(\CC^*)^3$-linearised line bundle on the quotient with weights $(1,1,3) \in \Pic(F) \simeq \ZZ^3$, the stability condition is given by the chamber which contains $(1,1,3)$. This chamber also contains $(1,1,1)$, as it is the positive orthant.

 It is immediate to check that $F$ is smooth as all relevant $3 \times 3$ minors of $R_F$ are equal to $\pm1$.

\medskip

(2) Let $\rho_1, \dots, \rho_{10}$ be the vectors in \eqref{eq:rays_of_X}, i.e.\ the primitive generators of the rays of the fan defining $X$. 
The map induced by $A$ sends the $\rho_i$'s onto the hyperplane with normal vector $m_X:=(1,1,-1,-1)$: 
\[ A 
\left(\begin{array}{c|c|c}
\rho_1 & \cdots & \rho_{10}
\end{array}\right)
= \begin{bmatrix}
		0 & 1 &0 &1 &0 &0 &-1& 0 &-1& 0  \\
		0 &0 &1 &0 &1 &0 &0 &-1& 0& -1\\
		-1 &1& 1& 0& 0& 1& -1& -1& 0& 0\\
		1& 0& 0& 1& 1& -1& 0 &0 &-1 &-1
	\end{bmatrix}.
	\]
One can check that each cone of $\Sigma_X$ is sent via $A$ to a cone in $\Sigma_F$. Furthermore, the image of the fan $\Sigma_X$ equals the intersection of the hyperplane $m_X^\perp$ with $\Sigma_F$.
Therefore the image of the toric morphism $X \to F$ is a prime divisor on $F$. We want to determine its equation in the Cox coordinates of $F$.

Each lattice element $A(\rho_i)$ belongs to exactly one 2-dimensional cone of $F$.
Let us write it as a linear combination, with positive coefficients, of the primitive generators of the fan $\Sigma_F$. For instance
\[
A(\rho_1)=(0,0,-1,1)^T=(0,0,-1,-1)^T+2(0,0,0,1)^T=r_6+2r_7.
\]
The matrix which express these combinations is
\[
\begin{array}{c|ccccccc}
	& r_1 &r_2&r_3&r_4 &r_5& r_6 &r_7  \\
	\hline   A(\rho_1) &  0 &0 &0 &0 &0 &1 &2   \\
	A(\rho_2) & 	1& 0 &0& 0& 1& 0& 0 \\
	A(\rho_3) & 	0 &1& 0 &0 &1 &0 &0  \\
	A(\rho_4) & 	1 &0 &0 &0 &0& 0& 1 \\
	A(\rho_5) &     0 &1 &0& 0& 0& 0& 1 \\
	A(\rho_6) &     0 &0 &0& 0& 2& 1& 0 \\
	A(\rho_7) &     0 &0 &1& 0& 0& 0& 1\\ 
	A(\rho_8) &     0 &0 &0& 1& 0& 0& 1\\ 
	A(\rho_9) &     0 &0 &1& 0& 1& 0& 0\\ 
	A(\rho_{10}) &  0 &0 &0& 1& 1& 0& 0\\ 
\end{array}
\]
Let $B$ be the $7 \times 10$ matrix given by transposing this matrix.
We have a commutative diagram with exact rows
\begin{equation*}
	\begin{aligned}
		\xymatrix{
			&&&&& \ZZ  \\
			0  \ar[r] & (\Pic(F))^\vee \ar[rr] &&(\Div_T F)^\vee  \ar[rr]^{R_F} &&N_F\ar@{->>}[u]^{m_X}\ar[r] &0 \\
			0  \ar[r]& (\Cl(X))^\vee \ar[u] \ar[rr]&& (\Div_T X)^\vee \ar[u]^{B^T}  \ar[rr]^{R_X} &&N_X\ar[u]^{A}\ar[r] &0 \\
		}
	\end{aligned}
\end{equation*}
which dualises to
\begin{equation*}
	\begin{aligned}
		\xymatrix{ & \ZZ \ar[d]^{m_X} \\
			0  \ar[r] & M_F \ar[d]^{A^T} \ar[rr]^{R_F^T} && \Div_T F  \ar[rr]^{D_F} \ar[d]^B &&\Pic(F) \ar[d] \ar[r] &0 \\
			0  \ar[r]& M_X \ar[rr]^{R_X^T} &&  \Div_T X   \ar[rr]^{R_X} && \Cl(X) \ar[r] &0 
		}
	\end{aligned}
\end{equation*}
where the two exact rows are the divisor sequences. The map $B \colon \Div_T F \to \Div_T X$ is the pull-back of torus-invariant divisors on $F$ along the morphism $X \to F$. In terms of Cox coordinates we have the following.
\begin{align*}
u_1& \mapsto x_2x_4\\
u_2& \mapsto x_3x_5\\
u_3& \mapsto x_7x_9\\
u_4& \mapsto x_8x_{10}\\
u_5& \mapsto x_2x_3x_6^2x_9x_{10}\\
u_6& \mapsto x_1x_6\\
u_7& \mapsto x_1^2x_4x_5x_7x_8
\end{align*}
It is clear that the equation describing the image of $X$ in $F$ is
\[
u_5u_7=u_6^2u_1u_2u_3u_4.
\]

It remains to check that the toric morphism $X \to F$ is a closed embedding. This can be checked locally by analysing the affine charts.
\end{proof}

\section{Deforming a toric variety}

In Proposition~\ref{prop:X_inside_F} we have seen that the toric Fano $3$-fold $X$ of Proposition~\ref{prop:definition_P_X} is a divisor inside a smooth toric Fano $4$-fold $F$.
Now we show that a particular $4$-dimensional subspace of the linear system $\vert \cO_F(X) \vert$ gives the miniversal ($\QQ$-Gorenstein) deformation of $X$.

\begin{prop} \label{prop:deforming_X_inside_F}
	Let $X$ be the toric Fano $3$-fold in Proposition~\ref{prop:definition_P_X}
	and let $F$ be the toric Fano $4$-fold in Proposition~\ref{prop:X_inside_F}.
	Let $u_1, \dots, u_7$ denote the Cox coordinates of $F$ as in Proposition~\ref{prop:X_inside_F}.
	Consider the $4$-parameter flat family
	\[
	\cX \to \AA^4 = \Spec \CC [c_1, c_2, c_3, c_4]
	\]
	 given by the equation
	 \begin{equation} \label{eq:equation_of_X_inside_F}
	 		u_5u_7-u_1u_2u_3u_4u_6^2+u_6^2(c_1u_1^2u_2^2+c_2 u_1^2u_4^2+ c_3 u_2^2u_3^2+c_4 u_3^2u_4^2)=0
	 \end{equation}
	inside $F$.
	
	Then the base change of $\cX \to \AA^4$ to $\Spf \CC \pow{c_1, c_2, c_3, c_4}$ is the miniversal ($\QQ$-Gorenstein) deformation of $X$.
	Moreover, the discriminant locus (i.e.\ the locus in $\AA^4$ where the fibres of $\cX \to \AA^4$ are singular) and the divisor $\{ c_1 c_2 c_3 c_4 = 0  \} \subset \AA^4$ coincide in a neighbourhood of the origin in $\AA^4$.
\end{prop}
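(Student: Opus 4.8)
The plan is to establish four things in turn: flatness with central fibre $X$, the $\QQ$-Gorenstein property, miniversality via a Kodaira--Spencer computation localised at the four nodes, and finally the identification of the discriminant; the last two steps share the same local computation.

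First I would check that every monomial appearing in \eqref{eq:equation_of_X_inside_F} has the same class $(0,0,2) \in \Pic(F) \simeq \ZZ^3$ with respect to the weight matrix $D_F$ of Proposition~\ref{prop:X_inside_F}; this is the class $\cO_F(X)$. Hence \eqref{eq:equation_of_X_inside_F} presents $\cX$ as a family of effective Cartier divisors on $F$ lying in the fixed linear system $\vert \cO_F(X) \vert$ and parametrised by $\AA^4$. A family of divisors in a fixed divisor class on a fixed projective variety has constant Hilbert polynomial, so $\cX \to \AA^4$ is flat; setting $c_1 = c_2 = c_3 = c_4 = 0$ recovers the equation of $X$ from Proposition~\ref{prop:X_inside_F}, so the central fibre is $X$. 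Since $\cX$ is a Cartier divisor in the smooth variety $F \times \AA^4$, it is Gorenstein, and $\omega_{\cX/\AA^4}$ is a line bundle by adjunction, so the deformation is $\QQ$-Gorenstein (in fact Gorenstein).

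The heart of the argument is the Kodaira--Spencer computation, which I would carry out one node at a time through the smooth functor \eqref{eq:restriction_map_def_functors}, reducing everything to the four local spaces $\Def{U_\al}, \dots, \Def{U_\delta}$. For each node I would locate the $4$-dimensional cone of $\Sigma_F$ whose relative interior contains the image under $A$ of the corresponding cone of $\Sigma_X$: for $\sigma_\al = \sigma_{2345}$ this is the cone spanned by $r_1, r_2, r_5, r_7$, and the other three nodes sit in the cones on $\{r_1,r_4,r_5,r_7\}$, $\{r_3,r_4,r_5,r_7\}$, $\{r_2,r_3,r_5,r_7\}$. Each of these cones is unimodular, so in the corresponding affine chart of $F$ the three Cox coordinates off the cone may be gauge-fixed to $1$ and the remaining four become honest coordinates. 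Substituting into \eqref{eq:equation_of_X_inside_F} turns it, in the chart of $\sigma_\al$ (fixing $u_3 = u_4 = u_6 = 1$), into
\[
u_5 u_7 - u_1 u_2 + c_4 + \bigl( c_2 u_1^2 + c_3 u_2^2 + c_1 u_1^2 u_2^2 \bigr) = 0,
\]
and similarly in the other three charts, with constant terms $c_3$, $c_1$, $c_2$ respectively. In each chart the node sits at the origin and equals the standard ordinary double point $u_5 u_7 - u_1 u_2$; the three non-constant perturbations all lie in its Jacobian ideal $(u_1, u_2, u_5, u_7)$, hence vanish in the Tjurina algebra $\TT^1 \simeq \CC$, while the constant term is precisely the smoothing parameter of \eqref{eq:meaning_of_t_alpha}. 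Thus the composite of $\Spf \CC\pow{c_1,c_2,c_3,c_4} \to \Def{X}$ with \eqref{eq:restriction_map_def_functors} sends $c_4, c_3, c_1, c_2$ to nonzero multiples of $t_\al, t_\beta, t_\gamma, t_\delta$ and is an isomorphism on tangent spaces. Since \eqref{eq:restriction_map_def_functors} is smooth and $\Def{X}$ is smooth of dimension $4$ by Proposition~\ref{prop:definition_P_X}(3), the classifying map $\Spf \CC\pow{c_1,c_2,c_3,c_4} \to \Def{X}$ is an isomorphism on tangent spaces between smooth $4$-dimensional formal schemes, hence an isomorphism; that is, the completion of $\cX \to \AA^4$ at the origin is the miniversal deformation of $X$.

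Finally, for the discriminant I would reuse these four local equations. Solving the Jacobian equations of each, one checks that for $(c_1,c_2,c_3,c_4)$ near the origin the only critical point near the relevant node is the origin of that chart, and that it lies on the fibre exactly when the corresponding constant term vanishes; so $\cX_c$ is singular near $\sigma_\al$ iff $c_4 = 0$, and likewise $c_3, c_1, c_2$ for $\sigma_\beta, \sigma_\gamma, \sigma_\delta$. Because $X = \cX_0$ is smooth away from its four nodes and $\cX \to \AA^4$ is proper (as $\cX$ is closed in the projective-over-base $F \times \AA^4$), the fibrewise singular points of $\cX_c$ for $c$ near $0$ are confined to neighbourhoods of the four nodes; combining the two facts gives that, in a neighbourhood of the origin of $\AA^4$, the fibre $\cX_c$ is singular precisely when $c_1 c_2 c_3 c_4 = 0$, which is the asserted equality. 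I expect the main obstacle to be the bookkeeping in the Kodaira--Spencer step: correctly matching each node to its unimodular chart of $F$, carrying out the gauge-fixing, and verifying that every non-constant perturbation falls into the Jacobian ideal; once this is in place, both miniversality and the discriminant statement follow at once.
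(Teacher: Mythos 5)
Your proposal is correct, and it reaches both conclusions by a genuinely different route from the paper's. The chart bookkeeping is identical: your four cones $\{r_1,r_2,r_5,r_7\}$, $\{r_1,r_4,r_5,r_7\}$, $\{r_3,r_4,r_5,r_7\}$, $\{r_2,r_3,r_5,r_7\}$ and the local equations with constant terms $c_4,c_3,c_1,c_2$ match the paper's charts $U^F_{1257}$, $U^F_{1457}$, $U^F_{3457}$, $U^F_{2357}$ and equations \eqref{eq:deformed_equation_U_alpha}--\eqref{eq:deformed_equation_U_delta}. The two key steps, however, differ. For miniversality the paper works to all orders: Lemmas~\ref{lemma:sequence_of_polynomials} and~\ref{lemma:two_deformations_are_isomorphic} construct an explicit formal change of variables absorbing the Jacobian-ideal terms $c_1u_1^2u_2^2+c_2u_1^2+c_3u_2^2$, so that the classifying map satisfies $\psi(t_\al)=c_4$, $\psi(t_\beta)=c_3$, $\psi(t_\gamma)=c_1$, $\psi(t_\delta)=c_2$ exactly, hence is an isomorphism. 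You work only to first order: the non-constant perturbations lie in the Jacobian ideal, so they die in $\TT^1$ of each node, the Kodaira--Spencer map is an isomorphism, and since both $\Spf \CC\pow{c_1,c_2,c_3,c_4}$ and $\Def{X}$ are smooth of dimension $4$ (Proposition~\ref{prop:definition_P_X}(3)) the formal inverse function theorem upgrades this to an isomorphism of the classifying map. This is shorter, renders the paper's two technical lemmas unnecessary, and is insensitive to exact coordinate identifications (you only need ``nonzero multiple''); the price is that you do not obtain the exact formula for $\psi$, which the paper gets but never strictly needs afterwards. For the discriminant, the paper runs a Gr\"obner-basis elimination (Lemma~\ref{lemma:discriminant_locus_def_3ODP}) producing the full discriminant polynomial $s_4(\cdots+1)$ of the local model, whereas you argue via the implicit function theorem that the chart origin is the unique nearby critical point (it is critical for every $c$, and the linearisation of $\partial_{u_1}f=\partial_{u_2}f=0$ at the origin for $c=0$ is invertible, so the solution branch is constant) and lies on the fibre iff the constant term vanishes; you also make explicit the properness argument confining fibrewise singular points of nearby fibres to neighbourhoods of the four nodes, a point the paper leaves implicit. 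Both routes are sound; to make yours fully rigorous you should carry out the ``one checks'' computation just indicated and record the standard fact that the differential of the classifying map equals the Kodaira--Spencer map of the family.
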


We put the word $\QQ$-Gorenstein in parenthesis because the miniversal deformation of $X$ coincides with the miniversal $\QQ$-Gorenstein deformation of $X$, since every infinitesimal deformation of $X$ is automatically $\QQ$-Gorenstein because $X$ is Gorenstein.

The fact that the fibre of $\cX \to \AA^4$ over the origin is exactly $X$ is the content of Proposition~\ref{prop:X_inside_F}, hence we need to prove the versality of this deformation.
Before doing this we prove some preliminary lemmata.

\begin{lemma} \label{lemma:sequence_of_polynomials}
Let $s_1, s_2, s_3, x, y$ be indeterminates. Consider the polynomial ring $R = \CC[s_1, s_2, s_3, x, y]$ with $\NN$-grading given by $\deg s_1 = \deg s_2 = \deg s_3 = 1$ and $\deg x = \deg y = 0$.
 For every $k \in \NN$,
 \begin{itemize}
 	\item  let $R_k$ be the homogenous summand of $R$ of degree $k$, i.e.\ the $\CC$-vector subspace of $R$ with basis
 	\[
 	\left\{ s_1^{i_1} s_2^{i_2} s_3^{i_3} x^m y^n \mid i_1, i_2, i_3, m,n \in \NN, \ i_1 + i_2 + i_3 = k  \right\};
 	\]
 	\item set $R_{\leq k} = \bigoplus_{0\leq i \leq k} R_i$, which is the $\CC$-vector subspace of $R$ with basis
 	\[
 	\left\{ s_1^{i_1} s_2^{i_2} s_3^{i_3} x^m y^n \mid i_1, i_2, i_3, m,n \in \NN, \ i_1 + i_2 + i_3 \leq k  \right\};
 	\]
 	\item set $R_{\geq k} = \bigoplus_{ i \geq k} R_i$, which is the ideal $(s_1,s_2,s_3)^k$ of $R$.
 \end{itemize}

Then there exist two sequences $\{  x_k \}_{k \in \NN}$ and $\{  y_k \}_{k \in \NN}$ of polynomials such that:
\begin{enumerate}
	\item $x_0 = x$ and $y_0 = y$,
	\item for every $k \in \NN^+$, $x_k \in R_{\leq k} \cap (x,y)$ and $y_k \in R_{\leq k} \cap (x,y)$,

	\item for every $k \in \NN^+$, $x_k - x_{k-1} \in R_{k}$ and $y_k - y_{k-1} \in R_{k}$,
	
	\item for every $k \in \NN^+$, $f_k := x_k y_k - (xy + s_1 x^2 + s_2 y^2 + s_3 x^2 y^2)$ lies in the ideal $R_{\geq k+1} \cap (x,y)^2$.

\end{enumerate}
\end{lemma}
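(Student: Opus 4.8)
The plan is to build the two sequences by induction on $k$, correcting the naive factorization $x\cdot y$ of the target polynomial one $s$-degree at a time. Write $g = xy + s_1 x^2 + s_2 y^2 + s_3 x^2 y^2$ for the polynomial appearing in condition (4), and observe that $g \in (x,y)^2$ and $g \equiv xy \pmod{R_{\geq 1}}$. The idea is to regard $x_k, y_k$ as successive partial factorizations of $g$: at each stage the remainder $f_k = x_k y_k - g$ is pushed into a higher power of the ideal $(s_1,s_2,s_3) = R_{\geq 1}$, while both factors are kept inside $(x,y)$. The grading by $s$-degree is what organizes the bookkeeping, since $x$ and $y$ have $s$-degree $0$ and the whole deformation lives in the $s$-variables.

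For the base case I take $x_0 = x$ and $y_0 = y$, so that $f_0 = -(s_1 x^2 + s_2 y^2 + s_3 x^2 y^2)$ lies in $R_{\geq 1} \cap (x,y)^2$; this is the level-$0$ version of condition (4) and seeds the induction. For the inductive step, suppose $x_{k-1}, y_{k-1} \in R_{\leq k-1} \cap (x,y)$ have been constructed with $f_{k-1} \in R_{\geq k} \cap (x,y)^2$. Let $h_k \in R_k$ be the $s$-homogeneous component of $f_{k-1}$ of degree exactly $k$. Since $(x,y)^2$ is a graded ideal for the $s$-grading (its generators $x^2, xy, y^2$ have $s$-degree $0$), this component still lies in $(x,y)^2$, and therefore can be written as $h_k = xA + yB$ with $A, B \in R_k \cap (x,y)$. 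I then put $\delta x_k = -B$ and $\delta y_k = -A$, and set $x_k = x_{k-1} + \delta x_k$, $y_k = y_{k-1} + \delta y_k$. Conditions (1)--(3) are immediate, and the membership $x_k, y_k \in (x,y)$ is preserved precisely because $\delta x_k, \delta y_k \in (x,y)$.

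It remains to verify condition (4), and this is where the degree bookkeeping does the work. Expanding
\[
x_k y_k = x_{k-1} y_{k-1} + y_{k-1}\,\delta x_k + x_{k-1}\,\delta y_k + \delta x_k\,\delta y_k,
\]
the last term lies in $R_{2k} \subseteq R_{\geq k+1}$ because $k \geq 1$, and replacing $y_{k-1}$ and $x_{k-1}$ by their $s$-degree-$0$ parts $y$ and $x$ alters $y_{k-1}\,\delta x_k$ and $x_{k-1}\,\delta y_k$ only by terms in $R_{\geq k+1}$. Hence, modulo $R_{\geq k+1}$,
\[
f_k \equiv f_{k-1} + y\,\delta x_k + x\,\delta y_k = f_{k-1} - (xA + yB) = f_{k-1} - h_k,
\]
which lies in $R_{\geq k+1}$ by the choice of $h_k$. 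Since $x_k, y_k \in (x,y)$ forces $f_k = x_k y_k - g \in (x,y)^2$, we conclude $f_k \in R_{\geq k+1} \cap (x,y)^2$, completing the induction.

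The only genuinely delicate point is the solvability of the correction equation $h_k = xA + yB$ with the factors $A, B$ themselves lying in $(x,y)$: this is exactly what keeps the two constraints---factors in $(x,y)$ and remainder in $(x,y)^2$---propagating together. It holds because the inductive hypothesis guarantees $f_{k-1} \in (x,y)^2$, so that its top-degree part $h_k$ lies in $(x,y)^2 = x\,(x,y) + y\,(x,y)$. Thus conditions (2) and (4) reinforce one another, and there is no obstruction at any finite stage; everything else is routine graded manipulation.
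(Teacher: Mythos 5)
Your proof is correct and follows essentially the same inductive scheme as the paper: correct the factorization one $s$-degree at a time, using that the lowest-degree homogeneous part of the error $f_{k-1}$ lies in $(x,y)^2 = x\,(x,y) + y\,(x,y)$ and can therefore be absorbed into the factors while keeping them in $(x,y)$. The only differences are cosmetic --- you seed the induction at $k=0$ with a level-$0$ version of condition (4) instead of exhibiting explicit $x_1, y_1$ as the paper does, and you extract the homogeneous component $h_k$ \emph{before} splitting it as $xA+yB$, whereas the paper splits all of $f_k$ first and then truncates, which forces its extra ``no cancellation'' bookkeeping that your order of operations avoids.
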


By (3) the sequences of the $x_k$'s and of the $y_k$'s give two elements of the ring  $\CC[x,y] \pow{s_1, s_2, s_3}$. By (4) the product of these two power series is $xy + s_1 x^2 +s_2 y^2 + s_3 x^2 y^2$.

\begin{proof}[Proof of Lemma~\ref{lemma:sequence_of_polynomials}]
	We proceed by induction on $k \in \NN^+$. In the base case $k=1$ we take $x_1 = x + s_2 y + s_3 x^2 y$ and $y_1 = y + s_1 x$. Obviously (2) and (3) hold. Moreover,
	\[
	f_1 = x_1 y_1 - (xy + s_1 x^2 + s_2 y^2 + s_3 x^2 y^2) = s_1 s_2 xy + s_1 s_3 x^3 y
	\]
	lies in $(s_1, s_2, s_3)^2 \cap (x,y)^2$; this gives (4).
	
	Now we do the inductive step. Fix $k \in \NN^+$ and assume that we have $x_0, \dots, x_k$ and $y_0, \dots, y_k$ which satisfy (2)-(4);  we shall construct $x_{k+1}$ and $y_{k+1}$.
	
By (4) $f_k \in (x,y)^2 \subset (x,y)$, so there exist polynomials $A$ and $B$ such that $f_k = Ax + By$.
We can assume that there are no cancellations between $Ax$ and $By$, for instance by requiring that $B \in \CC[s_1, s_2, s_3, y]$.
Since by (3) $f_k \in R_{\geq k+1}$, we have  $A, B \in R_{\geq k+1}$.
Let $a$ (resp.\ $b$) the homogeneous part of $A$ (resp.\ $B$) of degree $k+1$. So $a, b \in R_{k+1}$ and $A-a, B-b \in R_{\geq k+2}$.
Moreover, since $f_k \in (x,y)^2$ we have $A, B \in (x,y)$, and consequently $a,b \in (x,y)$.

Set
\begin{equation*}
	x_{k+1} = x_k - b \quad \text{and} \quad y_{k+1} = y_k - a.
\end{equation*}
By the inductive hypothesis we have $x_k, y_k \in R_{\leq k} \cap (x,y)$ and by construction we have $a,b \in R_{k+1} \cap (x,y)$; therefore $x_{k+1}$ and $y_{k+1}$ lie in $R_{\leq k+1} \cap (x,y)$; this is (2) for $k+1$.
Clearly we also have (3) for $k+1$.

We have
\begin{align*}
	f_{k+1} &= x_{k+1} y_{k+1} - (xy + s_1 x^2 + s_2 y^2 + s_3 x^2 y^2) \\
	&= (x_k - b)(y_k - a) - (xy + s_1 x^2 + s_2 y^2 + s_3 x^2 y^2) \\
	&= f_k -b y_k - a x_k +ab \\
	&= (Ax + By) - a x_k - b y_k +ab.
\end{align*}
Since $A,B,a,b,x_k,y_k \in (x,y)$ we get $f_{k+1} \in (x,y)^2$.
Moreover, we can write
\[
f_{k+1} = a(x-x_k) + (A-a) x + b(y-y_k) + (B-b) y + ab.
\]
Since $a,b \in R_{k+1}$, $A-a, B-b \in R_{\geq k+2}$ and $x-x_k, y-y_k \in R_{\geq 1}$, we have $f_{k+1} \in R_{\geq k+2}$. We have obtained (4) for $k+1$.
\end{proof}

\begin{lemma} \label{lemma:two_deformations_are_isomorphic}
Consider the affine $3$-fold $U = \Spec \CC [ x,y,z,w]  / (xy-zw)$.
Then the two formal deformations of $U$ over $\Spf \CC \pow{s_1, s_2, s_3, s_4}$ given by
\[
xy-zw + s_1 x^2 + s_2 y^2 + s_3 x^2 y^2 + s_4 = 0
\]
and by
\[
xy-zw + s_4 =0,
\]
respectively,
are isomorphic.
\end{lemma}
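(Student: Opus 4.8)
The plan is to build the desired isomorphism directly out of the power series produced by Lemma~\ref{lemma:sequence_of_polynomials}. Let $\hat x, \hat y \in \CC[x,y]\pow{s_1,s_2,s_3}$ denote the two power series given by the sequences $\{x_k\}_k$ and $\{y_k\}_k$; by property~(3) of that lemma they satisfy $\hat x \equiv x$ and $\hat y \equiv y$ modulo the ideal $(s_1,s_2,s_3)$, and by property~(4) (as recorded in the remark following the lemma) their product is
\[
\hat x \, \hat y = xy + s_1 x^2 + s_2 y^2 + s_3 x^2 y^2.
\]
I would regard $\hat x, \hat y$ as elements of the $\mathfrak m$-adically complete ring $R := \CC[x,y,z,w]\pow{s_1,s_2,s_3,s_4}$, where $\mathfrak m = (s_1,s_2,s_3,s_4)$, and introduce the continuous $\CC\pow{s_1,s_2,s_3,s_4}$-algebra endomorphism $\Psi$ of $R$ determined by $\Psi(x)=\hat x$, $\Psi(y)=\hat y$, $\Psi(z)=z$, $\Psi(w)=w$.

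The first step is to verify that $\Psi$ is an automorphism of $R$. Since $\hat x - x$ and $\hat y - y$ lie in $(s_1,s_2,s_3)R \subset \mathfrak m R$, the endomorphism $\Psi$ fixes each $s_i$ and alters $x,y$ only by terms of higher $\mathfrak m$-order; hence $\Psi$ induces the identity on the associated graded ring $\mathrm{gr}_{\mathfrak m} R$. By the standard fact that a continuous endomorphism of an $\mathfrak m$-adically complete ring inducing the identity on $\mathrm{gr}_{\mathfrak m}$ is bijective (injectivity via leading symbols, surjectivity by solving $\Psi(\xi)=x$, $\Psi(\eta)=y$ by successive approximation in the powers of $\mathfrak m$), we conclude $\Psi \in \Aut(R)$.

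The second step is the single substitution that makes everything work:
\[
\Psi\bigl(xy - zw + s_4\bigr) = \hat x\,\hat y - zw + s_4 = xy - zw + s_1 x^2 + s_2 y^2 + s_3 x^2 y^2 + s_4,
\]
so $\Psi$ carries the defining equation of the second deformation to that of the first. As $\Psi$ is an automorphism, it sends the principal ideal generated by the former onto the one generated by the latter, and therefore descends to an isomorphism
\[
R/\bigl(xy - zw + s_4\bigr) \xrightarrow{\ \sim\ } R/\bigl(xy - zw + s_1 x^2 + s_2 y^2 + s_3 x^2 y^2 + s_4\bigr).
\]
This map is $\CC\pow{s_1,s_2,s_3,s_4}$-linear, and reducing modulo $\mathfrak m$ it restricts to the identity on the central fibre $U$ (because $\hat x \equiv x$, $\hat y \equiv y$); hence it is an isomorphism of formal deformations of $U$ over $\Spf \CC\pow{s_1,s_2,s_3,s_4}$, which is exactly the assertion.

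I expect the only genuinely delicate point to be the claim that $\Psi$ is an automorphism: one must work inside the $\mathfrak m$-adic completion $R$ and justify both the convergence of $\hat x,\hat y$ there and the existence of a continuous inverse, rather than reason as though finitely many variables were involved. Once that is in place, the rest is the formal computation above together with the bookkeeping identifying the two quotient rings with the two stated deformations.
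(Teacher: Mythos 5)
Your proof is correct and takes essentially the same route as the paper: both use the sequences from Lemma~\ref{lemma:sequence_of_polynomials} to perform the substitution $x \mapsto \hat x$, $y \mapsto \hat y$ (fixing $z$, $w$), which carries $xy-zw+s_4$ exactly to $xy-zw+s_1x^2+s_2y^2+s_3x^2y^2+s_4$. The only difference is bookkeeping: the paper assembles the isomorphism as a compatible system of maps $\phi_k$ over the Artinian quotients $S_k = \CC[s_1,\dots,s_4]/(s_1,\dots,s_4)^{k+1}$ (invertible there because each is a translation by higher-order terms), whereas you work once and for all in the $(s_1,\dots,s_4)$-adically complete ring and check invertibility via the associated graded; reducing your $\Psi$ modulo $(s_1,\dots,s_4)^{k+1}$ recovers precisely the paper's $\phi_k$.
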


\begin{proof}
Fix $k \in \NN$. Consider the finite $\CC$-algebra
\[
S_k = \CC[s_1, s_2, s_3, s_4]/(s_1, s_2, s_3, s_4)^{k+1}
\]
 and the following two $S_k$-algebras
\begin{align*}
		A_k &= S_k[x,y,z,w] / (xy-zw + s_4), \\
	B_k &= S_k[x,y,z,w] / (xy-zw + s_1 x^2 + s_2 y^2 + s_3 x^2 y^2 + s_4).
\end{align*}
Let $x_k$ and $y_k$ be the polynomials constructed in Lemma~\ref{lemma:sequence_of_polynomials}.
By (4) one can
consider the $S_k$-algebra homomorphism $\phi_k \colon A_k \to B_k$ given by $x \mapsto x_k$, $y \mapsto y_k$, $z \mapsto z$, $w \mapsto w$.
By (1) and (2) we have $x - x_k \in (s_1, s_2, s_3, s_4)$, so $\phi_k$ is just a translation, hence it is invertible.

By (3) we have
\[
x_k \equiv x_{k-1} \mod (s_1, s_2, s_3, s_4)^k \quad \text{and} \quad y_k \equiv y_{k-1} \mod (s_1, s_2, s_3, s_4)^k.
\]
So $\phi_k$ and $\phi_{k-1}$ are equal modulo $(s_1, s_2, s_3, s_4)^k$. More precisely, by using the natural projection $S_k \onto S_{k-1}$ we get that $\phi_k \otimes_{S_k} \mathrm{id}_{S_{k-1}}$ and $\phi_{k-1}$ are equal as isomorphisms from $A_{k-1} = A_k \otimes_{S_k} S_{k-1}$ to $B_{k-1} = B_k \otimes_{S_k} S_{k-1}$.
Therefore the system of isomorphisms $\phi_k$'s gives the required isomorphism of formal deformations of $U$.
\end{proof}

\begin{lemma}\label{lemma:discriminant_locus_def_3ODP}
Consider the affine $3$-fold $U = \Spec \CC [ x,y,z,w]  / (xy-zw)$ and consider the flat deformation $\cU \to \AA^4 = \Spec \CC [s_1, s_2, s_3, s_4]$ of $U$ given by the equation
\begin{equation} \label{eq:deformed}
xy-zw + s_1 x^2 + s_2 y^2 + s_3 x^2 y^2 + s_4 = 0.
\end{equation}

Then the discriminant locus of $\cU \to \AA^4$, i.e.\ the locus in $\AA^4$ over which the fibres of $\cU \to \AA^4$ are singular, is the vanishing locus of
\begin{equation} \label{eq:discriminant}
s_4 (16 s_1^2 s_2^2 - 32 s_1 s_2 s_3 s_4 - 8 s_1 s_2 + 16 s_3^2 s_4^2 - 8 s_3 s_4 + 1).
\end{equation}
In particular, in a neighbourhood of $0$ in $\AA^4$ the discriminant locus coincides with the hyperplane $\{s_4 = 0\}$.
\end{lemma}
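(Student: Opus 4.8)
The plan is to apply the Jacobian criterion fibrewise and then eliminate the fibre coordinates to obtain an equation purely in $s_1,s_2,s_3,s_4$. Write $F = xy - zw + s_1 x^2 + s_2 y^2 + s_3 x^2 y^2 + s_4$ for the left-hand side of \eqref{eq:deformed}. A fibre $\cU_s$ is singular precisely when the system $F = \partial_x F = \partial_y F = \partial_z F = \partial_w F = 0$ has a solution. Since $\partial_z F = -w$ and $\partial_w F = -z$, any such point has $z = w = 0$, so the question reduces to deciding for which $s$ the three equations
\begin{align*}
xy + s_1 x^2 + s_2 y^2 + s_3 x^2 y^2 + s_4 &= 0, \\
y + 2 s_1 x + 2 s_3 x y^2 &= 0, \\
x + 2 s_2 y + 2 s_3 x^2 y &= 0
\end{align*}
admit a common solution $(x,y) \in \AA^2$. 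Equivalently, the discriminant locus is the image of the closed subscheme of $\AA^2_{x,y} \times \AA^4_s$ cut out by these three equations under the projection to $\AA^4_s$.

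The heart of the argument is a clean elimination of $x$ and $y$. First, the combination $x\,\partial_x F + y\,\partial_y F - 2F$ reduces to $2\bigl(s_3 (xy)^2 - s_4\bigr)$, so writing $p = xy$ we obtain $s_4 = s_3 p^2$. Next, introducing $u = 1 + 2 s_3 p$, the two derivative equations factor as $y u = -2 s_1 x$ and $x u = -2 s_2 y$; multiplying them gives $p\,(u^2 - 4 s_1 s_2) = 0$. Hence either $p = 0$, which via the derivative equations forces $x = y = 0$ and therefore $s_4 = 0$; or $u^2 = 4 s_1 s_2$. In the latter case, substituting $s_4 = s_3 p^2$ into $u^2 = (1 + 2 s_3 p)^2$ yields $s_3 p = s_1 s_2 - s_3 s_4 - \tfrac14$, and squaring together with $s_3^2 p^2 = s_3 s_4$ produces the single relation
\[
s_3 s_4 = \left( s_1 s_2 - s_3 s_4 - \tfrac14 \right)^2,
\]
which expands to exactly the second factor of \eqref{eq:discriminant}. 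This shows the discriminant locus is contained in the vanishing locus of \eqref{eq:discriminant}.

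For the reverse inclusion I would run the substitutions backwards. If $s_4 = 0$ the origin $x=y=z=w=0$ always lies on $\cU_s$ and is a singular point, so $\{s_4 = 0\}$ belongs to the discriminant. On the dense open locus of the sextic component where $s_1 s_2 s_3 \neq 0$, the displayed relation lets one solve for $p$, then for $x$ and $y$ over the algebraically closed field $\CC$, recovering a genuine critical point; passing to closures then covers the whole component. The main obstacle is precisely this bookkeeping in the elimination — isolating the spurious case $p = 0$ and checking that no factor is lost or spuriously gained — and it can be cross-checked by a Gröbner-basis elimination of the ideal $(F, \partial_x F, \partial_y F, z, w)$ in Magma. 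Finally, the ``in particular'' clause is immediate: the second factor of \eqref{eq:discriminant} takes the value $1$ at the origin of $\AA^4$, so it is a unit in a neighbourhood, and there the discriminant locus coincides with the hyperplane $\{s_4 = 0\}$.
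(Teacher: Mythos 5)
Your elimination is correct in substance, and it takes a genuinely different route from the paper. The paper also starts from the Jacobian criterion, but then simply forms the ideal $J=(F,\partial_xF,\partial_yF,\partial_zF,\partial_wF)$ and computes the elimination ideal $J\cap\CC[s_1,s_2,s_3,s_4]$ by a Gr\"obner basis with a lexicographic order, citing Cox--Little--O'Shea and a computer algebra system; the single generator it finds is \eqref{eq:discriminant}. You instead eliminate by hand: $\partial_zF=-w$, $\partial_wF=-z$ kill $z,w$; the Euler-type combination $x\partial_xF+y\partial_yF-2F=2\bigl(s_3(xy)^2-s_4\bigr)$ gives $s_4=s_3p^2$ with $p=xy$; and the factorisations $yu=-2s_1x$, $xu=-2s_2y$ with $u=1+2s_3p$ lead, after the case split $p=0$ versus $u^2=4s_1s_2$, to $s_4=0$ or to $s_3s_4=\bigl(s_1s_2-s_3s_4-\tfrac14\bigr)^2$, which is indeed the second factor of \eqref{eq:discriminant} up to the factor $16$. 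I have checked these identities; they are right, and your argument gives a transparent, machine-free proof of the inclusion of the discriminant in the vanishing locus of \eqref{eq:discriminant}, of the inclusion $\{s_4=0\}\subseteq$ discriminant, and of the ``in particular'' clause (the only part of the lemma used later in the paper), which follows from these two inclusions because the sextic factor is a unit near the origin. This is arguably more informative than the paper's computation, which one can only verify by rerunning it.

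The one step that does not hold up is ``passing to closures then covers the whole component''. The discriminant is the image of a closed subscheme of $\AA^2\times\AA^4$ under the non-proper projection to $\AA^4$, so it need not be closed --- and here it is not. At $(s_1,s_2,s_3,s_4)=(\tfrac12,\tfrac12,0,1)$ the sextic factor vanishes (it equals $1-2+1=0$), yet the fibre $xy-zw+\tfrac12(x^2+y^2)+1=0$ is smooth: any singular point would have $z=w=0$ and $y=-x$, where the equation evaluates to $1\neq0$. So the sextic component is \emph{not} contained in the discriminant, and no closure argument can put it there. To be fair, this is a defect of the lemma's literal set-theoretic statement rather than of your argument relative to the paper: the paper's own proof makes the same silent substitution, since by the Closure Theorem the elimination ideal $J\cap\CC[s_1,s_2,s_3,s_4]$ cuts out the Zariski \emph{closure} of the projection of $V(J)$, not the projection itself. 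Your computation actually locates exactly where the discrepancy lives (the locus $s_1s_2s_3=0$ inside the sextic). The clean fix is either to claim only the two inclusions you proved rigorously --- which is all the paper needs --- or to state that \eqref{eq:discriminant} cuts out the closure of the discriminant.
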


\begin{proof}
	Let $f \in \CC[s_1, s_2, s_3, s_4, x, y, z, w]$ be the polynomial appearing in \eqref{eq:deformed}.
	Consider the ideal $J \subseteq \CC[s_1, s_2, s_3, s_4, x, y, z, w]$ generated by $f$ and by the $4$ partial derivatives of $f$ with respect to $x$, $y$, $z$  and $w$.
	By the Jacobian criterion and \cite[\S3.2]{cox_little_oshea}, the discriminant locus in $\AA^4 = \Spec \CC [s_1,s_2,s_3,s_4]$ is defined by the ideal $J \cap \CC[s_1, s_2, s_3, s_4]$. By \cite[Theorem~3.1.2]{cox_little_oshea} this last ideal can be easily computed with any computer algebra software by taking a Gr\"obner basis of $J$ with respect to the lexicographic monomial order $>$ such that $x > y>z>w>s_1>s_2>s_3>s_4$.
	The ideal $J \cap \CC[s_1, s_2, s_3, s_4]$ is generated by the polynomial in \eqref{eq:discriminant}.
	
	The last assertion in Lemma~\ref{lemma:discriminant_locus_def_3ODP} is obvious because the second factor of the polynomial in \eqref{eq:discriminant} does not vanish at the origin.
\end{proof}

\begin{proof}[Proof of Proposition~\ref{prop:deforming_X_inside_F}]

We have already observed that Proposition~\ref{prop:X_inside_F} says that the fibre of $\cX \to \AA^4$ over the origin is $X$.
Moreover, one can easily see that all monomials in the Cox coordinates of $F$ appearing in \eqref{eq:equation_of_X_inside_F} have the same degree with respect to the grading given by $\Pic(F)$, i.e.\ they are all sections of the line bundle $\cO_F(0,0,2)$ on $F$. In other words, the fibres of $\cX \to \AA^4$ are elements of the linear system $\vert \cO_F(X) \vert$.

Consider the formal deformation of $X$ over $\Spf \CC \pow{c_1, c_2, c_3, c_4}$ given by taking the formal completion of $\cX \to \AA^4$ at the central fibre.

We know that the infinitesimal deformations of $X$ are exactly the infinitesimal deformations of its singularities, i.e.\ of the $4$ affine open subset $U_\al$, $U_\beta$, $U_\gamma$, $U_\delta$ which contain the $4$ singular points of $X$. Indeed, the map in \eqref{eq:restriction_map_def_functors} is smooth and induces an isomorphism on tangent spaces.
Therefore, in order to check that our formal deformation of $X$ over $\Spf \CC \pow{c_1, c_2, c_3, c_4}$ is the miniversal deformation of $X$, we need to check that it induces the miniversal deformation of $U_\al$, $U_\beta$, $U_\gamma$, and $U_\delta$ when restricted to $U_\al$, $U_\beta$, $U_\gamma$, and $U_\delta$, respectively.

More precisely, this deformation of $X$ over $\Spf \CC \pow{c_1, c_2, c_3, c_4}$ must be induced by the miniversal deformation of $X$, which is over $\Spf \CC \pow{t_\al, t_\beta, t_\gamma, t_\delta}$, via a local $\CC$-algebra homomorphism
\begin{equation} \label{eq:ring_homomorphism_versal}
\psi \colon \CC \pow{t_\al, t_\beta, t_\gamma, t_\delta} \longrightarrow  \CC \pow{c_1, c_2, c_3, c_4}.
\end{equation}
We will need to prove that this ring homomorphism is an isomorphism.

Before doing that, recall all notation from Proposition~\ref{prop:definition_P_X} and Proposition~\ref{prop:X_inside_F}. Denote by $C_{ijkl}$ the cone in $\Sigma_F$ generated by $r_i, r_j, r_k$ and $r_l$.

\medskip

Let us start from $U_\al$. It is easy to check that the linear map $A \colon N_X \to N_F$ maps the cone $\sigma_\alpha$ to $C_{1257}$. Therefore, the toric morphism $X \to F$ associated to $A$ maps $U_\al$ to the affine chart $U_{1257}^F \subset F$ associated to the cone $C_{1257}$.
We know that the monoid $\sigma_\al^\vee \cap M_X$ is generated by 
\begin{equation*}
						x_\al = (-1,1,1) \quad y_\al = (1,0,0) \quad z_\al = (0,0,1) \quad w_\al = (0,1,0),
\end{equation*}
whereas one can check that the monoid $C_{1257}^\vee \cap M_F$ is generated by
\begin{equation*}
u_1 =	(1,0,0,0) \quad u_2 = (0,1,0,0) \quad u_5 = (0,0,1,0) \quad u_7 = (0,0,0,1).
\end{equation*}
Here we have used the names of the Cox coordinates of $F$ because these $4$ elements correspond to the dehomogenisations of the Cox coordinates of $F$ with respect to the cone $C_{1257}$. Indeed $U_{1257}^F$ is the locus in $F$ where $u_3 \neq 0$, $u_4 \neq 0$ and $u_6 \neq 0$.
It is easy to see that the transpose $A^T \colon M_F \to M_X$ acts as
\begin{equation} \label{eq:assignements_chart_U_alpha}
	u_1 \mapsto w_\al \quad u_2 \mapsto z_\al \quad u_5 \mapsto x_\al \quad u_7 \mapsto y_\al.
\end{equation}
These are exactly the assignments of the surjective $\CC$-algebra homomorphism associated to the closed embedding
\[
U_\al = \Spec \frac{\CC [ x_\al, y_\al, z_\al, w_\al] }{(x_\al y_\al - w_\al z_\al)} \longrightarrow U_{1257}^F = \Spec \CC[u_1, u_2, u_5, u_7]
\]
which is the restriction of the toric closed embedding $X \into F$.
If we dehomogenise \eqref{eq:equation_of_X_inside_F} with respect to $C_{1257}$ we get the equation
\begin{equation*} 
	u_5 u_7-u_1u_2 + c_1 u_1^2u_2^2+c_2 u_1^2 + c_3 u_2^2 + c_4 =0.
\end{equation*}
If now we apply \eqref{eq:assignements_chart_U_alpha}, we deduce that the flat family $\cX \to \AA^4$ induces the deformation of $U_\al$ over $\Spf \CC \pow{c_1, c_2, c_3, c_4}$ given by the equation
\begin{equation} \label{eq:deformed_equation_U_alpha}
		x_\al y_\al - w_\al z_\al + c_1 w_\al^2 z_\al^2+c_2 w_\al^2 + c_3 z_\al^2 + c_4 =0.
\end{equation}
By Lemma~\ref{lemma:two_deformations_are_isomorphic} this deformation of $U_\al$ over $\Spf \CC \pow{c_1, c_2, c_3, c_4}$ is isomorphic to the one given by the equation 
\begin{equation*}
	x_\al y_\al - w_\al z_\al + c_4 =0.
\end{equation*}
By comparing with \eqref{eq:meaning_of_t_alpha} we have that the homomorphism $\psi$ in \eqref{eq:ring_homomorphism_versal} satisfies $\psi(t_\al) = c_4$.

\medskip

Let us now look at $U_\beta$. The analysis is very similar. The morphism associated to $A$ maps $U_\beta$ to $U^F_{1457}=\{u_2\neq 0,\, u_3\neq 0,\, u_6\neq 0\}$ and the induced map on the respective $\CC$-algebras is 
\begin{equation} \label{eq:assignements_chart_U_beta}
	u_1 \mapsto y_\beta \quad u_4 \mapsto x_\beta \quad u_5 \mapsto z_\beta \quad u_7 \mapsto w_\beta.
\end{equation}
In the chart $U^F_{1457}$, equation \eqref{eq:equation_of_X_inside_F} becomes
\begin{equation*} 
	u_5 u_7-u_1u_4 + c_1 u_1^2+c_2 u_1^2u_4^2 + c_3 + c_4 u_4^2 =0,
\end{equation*}
and the flat family $\cX \to \AA^4$ induces the deformation of $U_\beta$ over $\Spf \CC \pow{c_1, c_2, c_3, c_4}$ given by
\begin{equation} \label{eq:deformed_equation_U_beta}
		z_\beta w_\beta - x_\beta y_\beta + c_1y_\beta^2+c_2 x_\beta^2y_\beta^2 + c_3+ c_4x_\beta^2 =0.
\end{equation}
By Lemma~\ref{lemma:two_deformations_are_isomorphic} this deformation is isomorphic to 
\begin{equation*}
	z_\beta w_\beta - x_\beta y_\beta + c_3 =0, 
\end{equation*}
therefore we have that $\psi(t_\beta)=c_3$.

Similarly, $U_\gamma$ maps to $U^F_{3457}$ such that   
\begin{equation} \label{eq:assignements_chart_U_gamma}
	u_3 \mapsto x_\gamma \quad u_4 \mapsto y_\gamma  \quad u_5 \mapsto z_\gamma  \quad u_7 \mapsto w_\gamma
\end{equation}
Equation \eqref{eq:equation_of_X_inside_F} restricted to $U^F_{3457}$ induces the deformation of $U_\gamma$ given by
\begin{equation} \label{eq:deformed_equation_U_gamma}
		z_\gamma w_\gamma - x_\gamma y_\gamma + c_1+c_2 y_\gamma^2 + c_3 x_\gamma^2+ c_4x_\gamma^2y_\gamma^2 =0.
\end{equation}
We use Lemma~\ref{lemma:two_deformations_are_isomorphic} to deduce that $\psi(t_\gamma)=c_1$. 

Lastly, the map $U_\delta\to U^F_{2357}$ given by
\begin{equation} \label{eq:assignements_chart_U_delta}
	u_2 \mapsto x_\delta \quad u_3 \mapsto y_\delta  \quad u_5 \mapsto z_\delta  \quad u_7 \mapsto w_\delta 
\end{equation} induces the deformation of $U_\delta$ over $\Spf \CC \pow{c_1, c_2, c_3, c_4}$  
\begin{equation} \label{eq:deformed_equation_U_delta}
		z_\delta w_\delta - x_\delta y_\delta + c_1x_\delta^2+c_2 + c_3 x_\delta^2y_\delta^2+ c_4y_\delta^2 =0.
\end{equation} 
Again the change of variables in Lemma~\ref{lemma:two_deformations_are_isomorphic} allows us to conclude that $\psi(t_\delta)=c_2$.

%
%
%
%
\medskip

We have proved that the ring homomorphism $\psi$ in \eqref{eq:ring_homomorphism_versal} satisfies $\psi(t_\al) = c_4$, $\psi(t_\beta) = c_3$, $\psi(t_\gamma) = c_1$ and $\psi(t_\delta) = c_2$. Therefore $\psi$ is an isomorphism. This proves that the considered deformation of $X$ is the miniversal one.
This concludes the proof of the first part of the assertions in Proposition~\ref{prop:deforming_X_inside_F}.

\medskip

We need to analyse the discriminant locus of the family $\cX \to \AA^4$.
In a neighbourhood of the origin in $\AA^4 = \Spec \CC[c_1, c_2, c_3, c_4]$, the discriminant locus coincides with the union of the discriminant locus of the deformation of the $4$ affine charts of $X$ which contain the singularities of $X$.

We have computed the equation of the deformation of $U_\al$ in \eqref{eq:deformed_equation_U_alpha}. By Lemma~\ref{lemma:discriminant_locus_def_3ODP} the discriminant locus of this deformation of $U_\al$ coincides with the hyperplane $\{c_4 = 0\}$ in a neighbourhood of the origin in $\AA^4$.

By repeating the argument for $U_\beta$, $U_\gamma$ and $U_\delta$ and by taking the union of these hyperplanes, we conclude.
\end{proof}

\begin{remark}
By analysing certain automorphisms of $F$ which leave $X$ invariant, it might be possible that the discussion on the deformation of $U_\alpha$ be sufficient in order to deduce the results about the deformations of $U_\beta$, $U_\gamma$, $U_\delta$  in the proof above. This approach might make the proof less repetitive, but not shorter.
\end{remark}

\section{Conclusion}

\begin{proof}[Proof of Theorem~\ref{thm:main}]
Consider the flat deformation
	\[
	\begin{tikzcd}
		X \arrow{d} \arrow[hook]{r} &\cX \arrow{d}\\
		0 \arrow[hook]{r}  & \AA^4
	\end{tikzcd}
	\]
of $X$ considered in Proposition~\ref{prop:deforming_X_inside_F}.
By Proposition~\ref{prop:definition_P_X}(1) $X$ is K-polystable, in particular K-semistable.
Since being K-semistable is an open property \cite{BLX,xu_minimizing}, there exists an open neighbourhood $W$ of the origin in $\AA^4$ such that each fibre of $\cX_W := \cX \times_{\AA^4} W \to W$ is K-semistable.
Let $\Kss{3}{28}$ be the algebraic stack which parametrises $\QQ$-Gorenstein families of $3$-dimensional K-semistable Fano varieties with anticanonical volume $28$ \cite{xu_survey}.
The family $\cX_W \to W$ is induced by a morphism $W \to \Kss{3}{28}$.

The K-moduli space $ M := \Kps{3}{28}$ is the good moduli space of $\Kss{3}{28}$ in the sense of Alper, in particular there is a natural morphism $\Kss{3}{28} \to M$.
By composition we get a morphism $W \to M$ between schemes of finite type over $\CC$.
Since $\cX_W \to W$ is essentially the miniversal deformation of $X$, we have that in an analytic neighbourhood of $[X]$ the morphism $W \to M$ behaves as the map $\Delta^4 \to \Delta^2$ between polydiscs described in the proof of Proposition~\ref{prop:definition_P_X}(9).
In particular this shows that $W$ dominates the irreducible component of $M$ containing $[X]$.
Since $W$ is rational, the irreducible component of $M$ containing $[X]$ is unirational.

However, we already know by Corollary~\ref{cor:almost_everything_without_rationality} that there exists an open neighbourhood $U$ of $[X]$ in $M$ which is a smooth surface. Hence $U$ must be unirational, and hence rational because $\dim U = 2$.

By Corollary~\ref{cor:almost_everything_without_rationality} we know that the discriminant locus in $U$ (i.e.\ the locus in $U$ parametrising singular Fano $3$-folds) is a smooth curve.
This locus is dominated by the discriminant locus in $W$, which by Proposition~\ref{prop:deforming_X_inside_F} coincides with the reducible divisor $\{ c_1 c_2 c_3 c_4 = 0 \}$ in a neighbourhood of $0$ in $\AA^4$.
Since this divisor has rational components, we conclude that the discriminant locus in $U$ is a smooth rational curve.
\end{proof}


\section{Appendix: Laurent inversion and global embeddings}
\label{LIsec}

The toric Fano $3$-fold considered in Proposition~\ref{prop:definition_P_X} can be embedded into a toric $5$-fold as a complete intersection. This is done via the Laurent inversion method, developed by Coates--Kasprzyk--Prince \cite{laurent_inversion, from_cracked_to_fano, cracked_fano_toric_ci}.
Since one of the obtained equations is a linear cone, we can then reduce to the hypersurface embedding that appears in Proposition~\ref{prop:X_inside_F}.

\begin{prop} \label{prop:X_inside_Y}
	Let $X$ be the toric Fano $3$-fold considered in Proposition~\ref{prop:definition_P_X}.
	Let $Y$ be the toric $5$-fold given by the GIT quotient $\AA^8 \git (\CC^*)^3$ with stability condition $\omega=(1,1,1)$ and the following weight matrix 
	\[\begin{matrix}
y_1& y_2& y_3&y_4&y_5&y_6&y_7&y_8\\
\hline 
1 & 0 & 0 & 1 & 0& 0& -1& 0\\
0 &1 &0 &0 &1 &0 &-1& 0\\
0 &0 &1 &0 &0 &1 &1 &1\\
\end{matrix}\]
 Then $X$ can be embedded into $Y$ as the zero-section of a section of $L \oplus L^{\otimes 2}$, where $L$ is the $(\CC^*)^3$-linearised line bundles on $Y$ of weights $(0,0,1)$.
 More specifically, the equations of $X$ in the Cox coordinates of $Y$ are
\[y_4y_5y_7=y_3  \quad \text{and} \quad y_6y_8=y_7y_1y_2y_3.\]
\end{prop}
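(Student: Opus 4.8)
The plan is to obtain the ambient toric $5$-fold $Y$ together with the two displayed equations by running the Laurent inversion algorithm of \cite{laurent_inversion,from_cracked_to_fano,cracked_fano_toric_ci} on the polytope $P$ of Proposition~\ref{prop:definition_P_X}, and then to identify the resulting complete intersection with the hypersurface embedding $X \into F$ of Proposition~\ref{prop:X_inside_F} by eliminating the first equation, which is a \emph{linear cone}. First I would fix a scaffolding of $P$: this is the combinatorial input (a choice of decomposition recording struts and shapes) from which Laurent inversion produces the weight matrix of $Y$, the stability condition $\omega = (1,1,1)$, and the Cox-coordinate expressions of the defining equations. I would carry out this assembly in Magma \cite{Magma}, recording in particular that the two equations are the binomials $y_4 y_5 y_7 - y_3$ and $y_6 y_8 - y_7 y_1 y_2 y_3$.

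Next I would verify the line-bundle claim by a degree count in $\Pic(Y) \simeq \ZZ^3$, reading the weights directly off the weight matrix. The coordinate $y_3$ has weight $(0,0,1)$, and the monomial $y_4 y_5 y_7$ has weight $(1,0,0)+(0,1,0)+(-1,-1,1)=(0,0,1)$, so $y_4 y_5 y_7 - y_3$ is a section of $L = \cO_Y(0,0,1)$; similarly $y_6 y_8$ and $y_7 y_1 y_2 y_3$ both have weight $(0,0,2)$, so $y_6 y_8 - y_7 y_1 y_2 y_3$ is a section of $L^{\otimes 2}$. This shows that $X$ is cut out by a section of $L \oplus L^{\otimes 2}$.

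The heart of the argument is the reduction to Proposition~\ref{prop:X_inside_F}. Since $y_3$ occurs linearly in $y_4 y_5 y_7 - y_3$, the hypersurface $V = \{ y_3 = y_4 y_5 y_7 \} \subset Y$ is a linear cone: the $(\CC^*)^3$-equivariant map sending $(y_1,y_2,y_4,y_5,y_6,y_7,y_8)$ to $(y_1,y_2,y_4 y_5 y_7, y_4, y_5, y_6, y_7, y_8)$ has image $V$ and inverse the projection forgetting $y_3$, so it identifies $V$ with the toric variety with Cox ring $\CC[y_1,y_2,y_4,y_5,y_6,y_7,y_8]$ carrying the restricted $(\CC^*)^3$-grading. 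Comparing the seven surviving weight columns with the weight matrix of $F$ I would read off the dictionary $y_1 \mapsto u_1$, $y_2 \mapsto u_2$, $y_4 \mapsto u_3$, $y_5 \mapsto u_4$, $y_6 \mapsto u_5$, $y_7 \mapsto u_6$, $y_8 \mapsto u_7$, which together with the matching stability conditions $(1,1,1)$ gives a toric isomorphism $V \simeq F$. Substituting $y_3 = y_4 y_5 y_7$ into the second equation turns $y_6 y_8 = y_7 y_1 y_2 y_3$ into
\[
y_6 y_8 = y_1 y_2 y_4 y_5 y_7^2,
\]
which under the dictionary is exactly $u_5 u_7 = u_1 u_2 u_3 u_4 u_6^2$, the equation of $X$ in $F$. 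By Proposition~\ref{prop:X_inside_F} this hypersurface is $X$, so the complete intersection in $Y$ is $X$, as claimed.

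I expect the main obstacle to lie in the Laurent inversion step rather than in the reduction: certifying that the chosen scaffolding genuinely reproduces $X$, and that the list of monomials in the two equations is complete, is a finite but error-prone computer-algebra verification. A second delicate point is to make the linear-cone isomorphism $V \simeq F$ rigorous at the level of GIT quotients, i.e.\ to check that the equivariant map above matches the irrelevant loci and the stability chambers and not merely the affine cones; fortunately the identification of the second equation with Proposition~\ref{prop:X_inside_F} then forces the complete intersection to be $X$, so once the scaffolding bookkeeping is correct the conclusion is essentially automatic.
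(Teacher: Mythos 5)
Your proposal is correct, but it routes the proof genuinely differently from the paper. The paper's own proof stays entirely inside the Laurent inversion framework: it exhibits an explicit scaffolding of $P$, namely $S_1=\conv\{\rho_2,\rho_4\}$, $S_2=\conv\{\rho_3,\rho_5\}$, $S_3=\conv\{\rho_1,\rho_6,\rho_7,\rho_8,\rho_9,\rho_{10}\}$, whose shape $Z$ (the normal fan of $S_3$) is the bundle $\PP_{\PP^1}\left(\cO\oplus\cO(1)^{\oplus 2}\right)$; it assembles the weight matrix of $Y$ by \cite[Algorithm~5.1]{laurent_inversion}, derives the two binomial equations from the relations $v_1+v_2+v_4=0$ and $v_3+v_5=v_4$ in $\Sigma_Z$ via \cite[Proposition~12.2]{laurent_inversion}, and pins down $\omega=(1,1,1)$ by requiring $-K_Y-L_1-L_2$ to be ample as in \cite[Theorem~5.5]{laurent_inversion}; the elimination of $y_3$ and the comparison with $F$ appear only as a remark \emph{after} the proof. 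You invert this: Laurent inversion serves merely as the heuristic that produces the equations, and the actual certification is the linear-cone reduction to Proposition~\ref{prop:X_inside_F}. This is legitimate and logically leaner, since Proposition~\ref{prop:X_inside_F} is proved independently earlier in the paper (so there is no circularity), and your two checks --- the degree count and $V\simeq F$ --- are elementary; in particular your worry about ``certifying that the chosen scaffolding genuinely reproduces $X$'' is moot in your own architecture, because once the two binomials are written down your reduction proves the proposition regardless of their provenance, so your first paragraph is dispensable for rigour (what it would lose is precisely the purpose of the paper's appendix, namely recording how the embedding is found). Finally, the GIT point you flag as delicate is harmless, for a reason already implicit in your dictionary: $y_3$ and $y_4y_5y_7$ have the same weight $(0,0,1)$, so substituting $y_3=y_4y_5y_7$ turns any semistability certificate (a polynomial of weight $n(1,1,1)$ nonvanishing at the point) for the image of a point under the graph map $\AA^7\into\AA^8$ into one for the point itself, and conversely certificates on $\AA^7$ push forward; hence the graph map identifies the semistable locus of $\AA^7$ with the intersection of the cone over $V$ with the semistable locus of $\AA^8$, and since good quotients restrict to closed invariant subvarieties this yields $F\simeq V$ compatibly with the common stability condition $(1,1,1)$, completing your argument.
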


\begin{proof}
This is an immediate application of \cite[Algorithm 5.1]{laurent_inversion}. Consider the following $3$ polytopes:
\begin{align*}
	S_1 &= \conv \{ \rho_2, \rho_4 \}, \\
	S_2 &= \conv \{\rho_3, \rho_5 \}, \\
	S_3 &= \conv \{\rho_1, \rho_6, \rho_7, \rho_8, \rho_9, \rho_{10} \},
\end{align*}
which are depicted in Figure~\ref{ScaffoldingP735}.

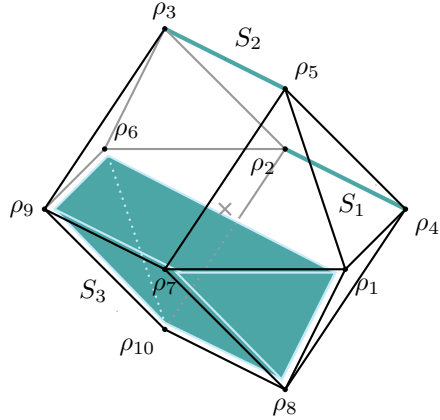
\begin{figure}[ht]
\centering
\begin{tikzpicture}[scale=0.8,every node/.style={scale=1}]

\filldraw[color=cyan!20, fill=blue!50!green!70, thick] (-2+1/18,1-2/18) -- (2-3/18,-1-1/18) -- (1-1/18,-3+3/18) -- (-1,-2+2/18) -- (-3+3/18,0) -- cycle; 
\draw[ultra thick, blue!50!green!70] (1,1) -- (3,0);
\draw[ultra thick, blue!50!green!70] (-1,3) -- (1,2);
\draw[thick, dotted, cyan!20] (-2+1/18,1-2/18) --  (-1,-2+2/18);

\draw[gray!80, thick] (1,1) -- (-2,1);
\draw[gray!80, thick] (1,1) -- (-1,3);
\draw[gray!80, thick] (-1,3) -- (-2,1);
\draw[gray!80, thick] (-3,0) -- (-2,1);
\draw[gray!80, thick] (1,1) -- (1/4,-1/8);
\draw[gray!80, dotted, thick] (1/4,-1/8) -- (-1,-2);

\draw[thick, cyan!20] (2-3/18,-1-1/18) -- (-1+3.5/18,-1-1/18) --   (1-1/18,-3+3/18)  -- cycle;
\draw[thick, cyan!20]  (-3+3/18,0)  -- (-1+5/18,-1-1/18);

\draw[black, thick] (-3,0) -- (-1,3);
\draw[black, thick] (-3,0) -- (-1,-1);
\draw[black, thick] (-1,-1) -- (1,2);
\draw[black, thick] (1,2) -- (3,0);
\draw[black, thick] (1,2) -- (2,-1);
\draw[black, thick] (-1,-1) -- (2,-1);
\draw[black, thick] (2,-1) -- (3,0);
\draw[black, thick] (3,0) -- (1,-3);
\draw[black, thick] (2,-1) -- (1,-3);
\draw[black, thick] (-1,-2) -- (1,-3);
\draw[black, thick] (-1,-2) -- (-3,0);
\draw[black, thick] (-1,-1) -- (1,-3);

\draw[gray!80, thick] (0.1,0.1) -- (-0.1,-0.1);
\draw[gray!80, thick] (-0.1,0.1) -- (0.1,-0.1);

\filldraw[black] (2,-1) circle (1pt) node[anchor=north west] {$\rho_1$};
\filldraw[black] (3,0) circle (1pt) node[anchor=north west] {$\rho_4$};
\filldraw[black] (1,1) circle (1pt) node[anchor=north east] {$\rho_2$};
\filldraw[black] (-2,1) circle (1pt) node[anchor=south west] {$\rho_6$};
\filldraw[black] (-3,0) circle (1pt) node[anchor=east] {$\rho_9$};
\filldraw[black] (1,-3) circle (1pt) node[anchor=north] {$\rho_8$};
\filldraw[black] (-1,-2) circle (1pt) node[anchor=north east] {$\rho_{10}$};
\filldraw[black] (1,2) circle (1pt) node[anchor=south west] {$\rho_5$};
\filldraw[black] (-1,3) circle (1pt) node[anchor=south] {$\rho_3$};
\filldraw[black] (-1,-1) circle (1pt) node[anchor=north ] {$\rho_7$};

\filldraw[black] (2.5,0.4) circle (0pt) node[anchor=north east] {$S_1$};
\filldraw[black] (0,2.5) circle (0pt) node[anchor=south west] {$S_2$};
\filldraw[black] (-1.8,-1.7) circle (0pt) node[anchor=south east] {$S_3$};

%
\end{tikzpicture}

\caption{Scaffolding of P735}
\label{ScaffoldingP735}

\end{figure}

Let $\Sigma_Z$ be the normal fan of $S_3$. The rays of $\Sigma_Z$ are the following vectors in $M = \Hom_\ZZ(N, \ZZ)$:
\begin{equation*}
	\begin{aligned}
		v_1 & = (0,-1,0), \\
		v_2 &= (0,0,-1), \\
		v_3 &= (-1,1,1), \\
	v_4 &=	(0,1,1), \\
	v_5 &=	(1,0,0).
	\end{aligned}
\end{equation*}
Let $Z$ be the $T_M$-toric variety associated to the $\Sigma_Z$. It is smooth, projective of dimension $3$. Since $\conv(S_1 \cup S_2 \cup S_3)=P$, the collection $S=\{S_1, S_2, S_3\}$ represents a \textit{scaffolding} of $P$ with shape $Z$, as introduced in \cite{laurent_inversion}. 

%
We also see that $Z$ is a $\PP^2$-bundle over $\PP^1$, namely $Z = \PP_{\PP^1} \left( \cO \oplus \cO(1)^{\oplus 2}  \right)$. This ensures that the Laurent inversion construction embeds $X$ as a complete intersection of codimension $\rho(Z)=2$ (see the proof of \cite[Proposition~12.2]{laurent_inversion}). 
In particular, the relations that give the $\PP^2$-bundle structure on $\PP^1$ are 
\begin{equation}
\label{eq:relations}
	\begin{aligned}
		v_1+v_2+v_4 &=0\\
		v_3+v_5&=v_4.
	\end{aligned}
\end{equation}
Here the first relation simply describes the $\PP^2$ fibre, while the second is a primitive relation (in the sense of Batyrev \cite{Batyrev}) obtained by pulling back of the relation $v'_3+v'_5=0$ in the quotient lattice induced by the projection to $\PP^1$. 

Consider the following $3$ torus invariant nef divisors on $Z$:
\begin{equation}
\label{eq:struts}
	\begin{aligned}
		D_1 &= E_1 - E_4, \\
		D_2 &= E_2 - E_4, \\
		D_3 &= E_3 + E_4 + E_5,
	\end{aligned}
\end{equation} 
where $E_i$ are the torus-invariant divisors associated to the $v_i$. Their section polytopes are exactly $S_1$, $S_2$, $S_3$, respectively.

We consider the lattice $\widetilde{N} = \Div_{T_M} Z= \ZZ^5$ with basis $E_1, \dots, E_5$. This will be the ambient lattice for the fan of $Y$ which we describe below as a GIT quotient using the expressions in \eqref{eq:struts}. We then use the relations \eqref{eq:relations} to deduce the equations of $X$ inside $Y$, keeping in mind the correspondence $E_i \leftrightarrow v_i$.

Following the construction in \cite[Algorithm~5.1]{laurent_inversion}, the weight matrix of $Y$ is of size $(r+z)\times r=8\times 3$, where $r$ is the number of $S_i$ and $z$ is the number of rays in $\Sigma_Z$, given by:
\[\begin{matrix}
I_1 &I_2 &I_3& E_1 &E_2 &E_3 &E_4 & E_5\\
\hline 
1 & 0 & 0 & 1 & 0& 0& -1& 0\\
0 &1 &0 &0 &1 &0 &-1& 0\\
0 &0 &1 &0 &0 &1 &1 &1\\
\hline
y_1& y_2& y_3&y_4&y_5&y_6&y_7&y_8
\end{matrix}\]
Here a row $i$ corresponds to a strut $S_i$ (or, alternatively, to a divisor $D_i$ whose polytope is $S_i$) and each $(i,j)$ entry outside of the $r\times r$-identity block is the coefficient of $E_{j}$ in the expression of $D_i$ in \eqref{eq:struts}. 

The equations of $X$ in the Cox coordinates on $Y$ are \[y_4y_5y_7=y_3 \quad \text{and} \quad y_6y_8=y_7y_1y_2y_3,\] which are deduced from \eqref{eq:relations} as explained in \cite[Proposition~12.2]{laurent_inversion}.



In particular, this shows that $X$ is a complete intersection $L_1\oplus L_2$, where $L_i$ are $(\CC^*)^3$-linearised line bundles with weights
\[
L_1=\cO_Y(y_4)\otimes \cO_Y(y_5)\otimes\cO_Y(y_7)=\cO_Y\begin{pmatrix}0\\0\\1\end{pmatrix}
\]
and
\[
L_2=\cO_Y(y_6)\otimes \cO_Y(y_8)=\cO_Y\begin{pmatrix}0\\0\\2\end{pmatrix}.
\]
According to \cite[Theorem~5.5]{laurent_inversion}, this is an embedding of a Fano variety; a posteriori the stability condition needed to define $Y$ should be chosen such that \[-K_Y-L_1-L_2=\cO_Y\left(\begin{pmatrix}1\\1\\4\end{pmatrix}-\begin{pmatrix}0\\0\\1\end{pmatrix}-\begin{pmatrix}0\\0\\2\end{pmatrix}\right)=\cO_Y\begin{pmatrix}1\\1\\1\end{pmatrix}\] is ample, i.e. $\omega=(1,1,1)$. \end{proof}

\begin{remark}
The ambient space $Y$ thus obtained is in fact a smooth toric $5$-fold. Since the first equation of the embedding in Proposition~\ref{prop:X_inside_Y} is linear in $y_3$, we can solve for this variable and transform the second equation into $y_6y_8=y_7^2y_1y_2y_4y_5$. Up to renaming the ambient Cox variables, this is exactly the embedding of $X$ inside the $4$-fold $F$ from Proposition~\ref{prop:X_inside_F}.
\end{remark}

\bibliography{Biblio_Liana_Andrea}

@misc{GRDB-toricsmooth,
	author = {Obro, Mikkel},
	date-added = {2024-09-25 16:02:44 +0200},
	date-modified = {2024-09-25 16:03:56 +0200},
	howpublished = {Database of smooth toric Fano varieties, online},
	note = {\url{http://grdb.co.uk/forms/toricsmooth}},
	title = {The {G}raded {R}ing {D}atabase}}

@article{Delcroix,
	author = {Delcroix, Thibaut},
	date-added = {2024-09-25 14:53:01 +0200},
	date-modified = {2024-09-25 15:44:32 +0200},
	journal = {Annales {S}cientifiques de l'{{\'E}}cole {N}ormale {S}up{\'e}rieure},
	number = {3},
	pages = {615-662},
	title = {K-stability of {F}ano spherical varieties},
	volume = {53},
	year = {2020}}

@article{Suess,
	abstract = {We relate the global log canonical threshold of a variety with torus action to the global log canonical threshold of its quotient. We apply this to certain Fano varieties and use Tian's criterion to prove the existence of K{\"a}hler--Einstein metrics on them. In particular, we obtain simple examples of Fano threefolds being K{\"a}hler--Einstein but admitting deformations without K{\"a}hler--Einstein metric.},
	author = {Hendrik S{\"u}{\ss}},
	date-added = {2024-09-25 14:04:09 +0200},
	date-modified = {2024-09-25 14:04:14 +0200},
	doi = {https://doi.org/10.1016/j.aim.2013.06.023},
	issn = {0001-8708},
	journal = {Advances in Mathematics},
	keywords = {K{\"a}hler--Einstein metric, -variety, Torus action, Fano variety, Log-canonical threshold},
	pages = {100-113},
	title = {K{\"a}hler--Einstein metrics on symmetric Fano T-varieties},
	url = {https://www.sciencedirect.com/science/article/pii/S0001870813002326},
	volume = {246},
	year = {2013},
	bdsk-url-1 = {https://www.sciencedirect.com/science/article/pii/S0001870813002326},
	bdsk-url-2 = {https://doi.org/10.1016/j.aim.2013.06.023}}

@article{IltenSuess,
	author = {Nathan Ilten and Hendrik S{\"u}{\ss}},
	date-added = {2024-09-25 14:03:25 +0200},
	date-modified = {2024-09-25 14:03:44 +0200},
	doi = {10.1215/00127094-3714864},
	journal = {Duke Mathematical Journal},
	keywords = {$T$-varieties, Fano varieties, K{\"a}hler--Einstein metric, K-stability, torus action},
	number = {1},
	pages = {177 -- 204},
	publisher = {Duke University Press},
	title = {{K-stability for Fano manifolds with torus action of complexity $1$}},
	url = {https://doi.org/10.1215/00127094-3714864},
	volume = {166},
	year = {2017},
	bdsk-url-1 = {https://doi.org/10.1215/00127094-3714864}}

@article{BatyrevSelivanova,
	author = {Batyrev, Victor V. and Selivanova, Elena N.},
	date-added = {2024-09-25 13:24:03 +0200},
	date-modified = {2024-09-25 13:24:57 +0200},
	fjournal = {Journal f\"ur die Reine und Angewandte Mathematik.},
	issn = {0075-4102,1435-5345},
	journal = {J. Reine Angew. Math.},
	pages = {225--236},
	title = {Einstein-{K}\"ahler metrics on symmetric toric {F}ano manifolds},
	volume = {512},
	year = {1999}}

@article{Mabuchi,
	author = {Toshiki Mabuchi},
	date-added = {2024-09-25 13:17:02 +0200},
	date-modified = {2024-09-25 13:17:10 +0200},
	journal = {Osaka Journal of Mathematics},
	number = {4},
	pages = {705 -- 737},
	publisher = {Osaka University and Osaka Metropolitan University, Departments of Mathematics},
	title = {{Einstein-K{\"a}hler forms, Futaki invariants and convex geometry on toric Fano varieties}},
	volume = {24},
	year = {1987}}

@article{BlumJonsson,
	abstract = {Let X be a normal complex projective variety with at worst klt singularities, and L a big line bundle on X. We use valuations to study the log canonical threshold of L, as well as another invariant, the stability threshold. The latter generalizes a notion by Fujita and Odaka, and can be used to characterize when a Q-Fano variety is K-semistable or uniformly K-stable. It can also be used to generalize volume bounds due to Fujita and Liu. The two thresholds can be written as infima of certain functionals on the space of valuations on X. When L is ample, we prove that these infima are attained. In the toric case, toric valuations achieve these infima, and we obtain simple expressions for the two thresholds in terms of the moment polytope of L.},
	author = {Harold Blum and Mattias Jonsson},
	date-added = {2024-09-25 12:36:02 +0200},
	date-modified = {2024-09-25 12:36:14 +0200},
	doi = {https://doi.org/10.1016/j.aim.2020.107062},
	issn = {0001-8708},
	journal = {Advances in Mathematics},
	keywords = {Valuations, K-stability, Log canonical thresholds},
	pages = {107062},
	title = {Thresholds, valuations, and {K}-stability},
	url = {https://www.sciencedirect.com/science/article/pii/S0001870820300888},
	volume = {365},
	year = {2020},
	bdsk-url-1 = {https://www.sciencedirect.com/science/article/pii/S0001870820300888},
	bdsk-url-2 = {https://doi.org/10.1016/j.aim.2020.107062}}

@unpublished{Galkin,
	archiveprefix = {arXiv},
	author = {Sergey Galkin},
	date-added = {2024-09-25 12:29:02 +0200},
	date-modified = {2024-09-25 12:30:12 +0200},
	eprint = {1809.02705},
	note = {\arxiv{1809.02705}},
	primaryclass = {math.AG},
	title = {Small toric degenerations of {F}ano threefolds},
	url = {https://arxiv.org/abs/1809.02705},
	year = {2018},
	bdsk-url-1 = {https://arxiv.org/abs/1809.02705}}

@unpublished{cheltsov2023kstability,
	archiveprefix = {arXiv},
	author = {Ivan Cheltsov and Tiago Duarte Guerreiro and Kento Fujita and Igor Krylov and Jesus Martinez-Garcia},
	date-added = {2023-09-28 14:07:03 +0100},
	date-modified = {2024-09-25 16:04:42 +0200},
	eprint = {2309.12522},
	note = {\arxiv{2309.12522} (to appear in {J.} {R}eine {A}ngew. {M}ath.)},
	primaryclass = {math.AG},
	title = {K-stability of {C}asagrande-{D}ruel varieties},
	year = {2023}}

@unpublished{quarticthreefolds,
	archiveprefix = {arXiv},
	author = {Hamid Abban and Ivan Cheltsov and Alexander Kasprzyk and Yuchen Liu and Andrea Petracci},
	date-added = {2023-06-12 13:58:52 +0100},
	date-modified = {2024-09-25 15:59:50 +0200},
	eprint = {2210.14781},
	note = {\arxiv{2210.14781} (to appear in {A}lgebr. {G}eom.)},
	primaryclass = {math.AG},
	title = {On {K}-moduli of quartic threefolds},
	year = {2023}}

@article{Magma,
	author = {Bosma, Wieb and Cannon, John and Playoust, Catherine},
	date-added = {2023-06-12 13:51:49 +0100},
	date-modified = {2023-06-12 13:51:58 +0100},
	doi = {10.1006/jsco.1996.0125},
	fjournal = {Journal of Symbolic Computation},
	issn = {0747-7171},
	journal = {J. Symbolic Comput.},
	note = {Computational algebra and number theory (London, 1993)},
	number = {3-4},
	pages = {235--265},
	title = {The {M}agma algebra system. {I}. {T}he user language},
	volume = {24},
	year = {1997},
	bdsk-url-1 = {https://doi.org/10.1006/jsco.1996.0125}}

@incollection{ccggsk,
	author = {Coates, Tom and Corti, Alessio and Galkin, Sergey and Golyshev, Vasily and Kasprzyk, Alexander},
	booktitle = {European {C}ongress of {M}athematics},
	date-added = {2023-03-21 18:30:27 +0200},
	date-modified = {2023-03-21 18:30:57 +0200},
	mrclass = {14J33 (14J45)},
	pages = {285--300},
	publisher = {Eur. Math. Soc., Z\"{u}rich},
	title = {Mirror symmetry and {F}ano manifolds},
	year = {2013}}

@article{altmann_t1,
	author = {Altmann, Klaus},
	fjournal = {Journal of Pure and Applied Algebra},
	issn = {0022-4049},
	journal = {J. Pure Appl. Algebra},
	number = {3},
	pages = {239--259},
	title = {Computation of the vector space {$T^1$} for affine toric varieties},
	volume = {95},
	year = {1994}}

@article{zhuang,
	author = {Zhuang, Ziquan},
	fjournal = {Duke Mathematical Journal},
	issn = {0012-7094},
	journal = {Duke Math. J.},
	note = {With an appendix by Zhuang and Charlie Stibitz},
	number = {12},
	pages = {2205--2229},
	title = {Birational superrigidity and {$K$}-stability of {F}ano complete intersections of index 1},
	volume = {169},
	year = {2020}}

@article{belousov_loginov,
AUTHOR = {Belousov, Grigory and Loginov, Konstantin},
     TITLE = {K-stability of {F}ano threefolds of rank 4 and degree 24},
   JOURNAL = {Eur. J. Math.},
  FJOURNAL = {European Journal of Mathematics},
    VOLUME = {9},
      YEAR = {2023},
    NUMBER = {3},
     PAGES = {Paper No. 80, 20}}

@article{denisova,
    AUTHOR = {Denisova, Elena},
     TITLE = {On {$K$}-stability of {$\Bbb P^3$} blown up along the disjoint
              union of a twisted cubic curve and a line},
   JOURNAL = {J. Lond. Math. Soc. (2)},
  FJOURNAL = {Journal of the London Mathematical Society. Second Series},
    VOLUME = {109},
      YEAR = {2024},
    NUMBER = {5},
     PAGES = {Paper No. e12911, 26}}

@article{cheltsov_fujita_kishimoto_okada,
	author = {Cheltsov, Ivan and Fujita, Kento and Kishimoto, Takashi and Okada, Takuzo},
	fjournal = {Nagoya Mathematical Journal},
	journal = {Nagoya Math. J.},
	pages = {686--714},
	title = {K-stable divisors in {$\Bbb P^1\times\Bbb P^1\times\Bbb P^2$} of degree {$(1,1,2)$}},
	volume = {251},
	year = {2023}}

@article{cheltsov_park,
	author = {Cheltsov, Ivan and Park, Jihun},
	fjournal = {European Journal of Mathematics},
	journal = {Eur. J. Math.},
	number = {3},
	pages = {834--852},
	title = {K-stable {F}ano threefolds of rank 2 and degree 30},
	volume = {8},
	year = {2022}}

@article{cheltsov_denisova_fujita,
    AUTHOR = {Cheltsov, Ivan and Denisova, Elena and Fujita, Kento},
     TITLE = {K-stable smooth {F}ano threefolds of {P}icard rank two},
   JOURNAL = {Forum Math. Sigma},
  FJOURNAL = {Forum of Mathematics. Sigma},
    VOLUME = {12},
      YEAR = {2024},
     PAGES = {Paper No. e41, 64}}

@article{tian_del_pezzo,
	author = {Tian, G.},
	fjournal = {Inventiones Mathematicae},
	issn = {0020-9910},
	journal = {Invent. Math.},
	number = {1},
	pages = {101--172},
	title = {On {C}alabi's conjecture for complex surfaces with positive first {C}hern class},
	volume = {101},
	year = {1990}}

@article{liu_rank2,
	author = {Liu, Yuchen},
	date-added = {2023-03-15 11:27:00 +0000},
	date-modified = {2023-03-15 11:27:13 +0000},
	doi = {10.1007/s00209-022-03192-4},
	fjournal = {Mathematische Zeitschrift},
	issn = {0025-5874},
	journal = {Math. Z.},
	number = {2},
	pages = {Paper No. 38, 9},
	title = {K-stability of {F}ano threefolds of rank 2 and degree 14 as double covers},
	url = {https://doi.org/10.1007/s00209-022-03192-4},
	volume = {303},
	year = {2023},
	bdsk-url-1 = {https://doi.org/10.1007/s00209-022-03192-4}}

@article{fujita,
	author = {Fujita, Kento},
	date-added = {2023-03-15 11:22:03 +0000},
	date-modified = {2023-03-15 11:22:11 +0000},
	doi = {10.1017/s1474748017000111},
	fjournal = {Journal of the Institute of Mathematics of Jussieu. JIMJ. Journal de l'Institut de Math\'{e}matiques de Jussieu},
	issn = {1474-7480},
	journal = {J. Inst. Math. Jussieu},
	number = {3},
	pages = {519--530},
	title = {K-stability of {F}ano manifolds with not small alpha invariants},
	url = {https://doi.org/10.1017/s1474748017000111},
	volume = {18},
	year = {2019},
	bdsk-url-1 = {https://doi.org/10.1017/s1474748017000111}}

@article{abban_zhuang,
	author = {Abban, Hamid and Zhuang, Ziquan},
	date-added = {2023-03-15 11:17:58 +0000},
	date-modified = {2023-03-15 11:18:06 +0000},
	doi = {10.1017/fmp.2022.11},
	fjournal = {Forum of Mathematics. Pi},
	journal = {Forum Math. Pi},
	pages = {Paper No. e15, 43},
	title = {K-stability of {F}ano varieties via admissible flags},
	url = {https://doi.org/10.1017/fmp.2022.11},
	volume = {10},
	year = {2022},
	bdsk-url-1 = {https://doi.org/10.1017/fmp.2022.11}}

@article{liu_cubic_fourfolds,
	author = {Liu, Yuchen},
	date-added = {2023-03-15 11:10:50 +0000},
	date-modified = {2023-03-15 11:27:29 +0000},
	doi = {10.1515/crelle-2022-0002},
	fjournal = {Journal f\"{u}r die Reine und Angewandte Mathematik. [Crelle's Journal]},
	issn = {0075-4102},
	journal = {J. Reine Angew. Math.},
	pages = {55--77},
	title = {K-stability of cubic fourfolds},
	url = {https://doi.org/10.1515/crelle-2022-0002},
	volume = {786},
	year = {2022},
	bdsk-url-1 = {https://doi.org/10.1515/crelle-2022-0002}}

@article{liu_xu,
	author = {Liu, Yuchen and Xu, Chenyang},
	date-added = {2023-03-15 11:08:51 +0000},
	date-modified = {2023-03-15 11:09:01 +0000},
	doi = {10.1215/00127094-2019-0006},
	fjournal = {Duke Mathematical Journal},
	issn = {0012-7094},
	journal = {Duke Math. J.},
	number = {11},
	pages = {2029--2073},
	title = {K-stability of cubic threefolds},
	url = {https://doi.org/10.1215/00127094-2019-0006},
	volume = {168},
	year = {2019},
	bdsk-url-1 = {https://doi.org/10.1215/00127094-2019-0006}}

@incollection{mabuchi_mukai,
	author = {Mabuchi, Toshiki and Mukai, Shigeru},
	booktitle = {Einstein metrics and {Y}ang-{M}ills connections ({S}anda, 1990)},
	date-added = {2023-03-15 11:03:46 +0000},
	date-modified = {2023-03-15 11:03:55 +0000},
	pages = {133--160},
	publisher = {Dekker, New York},
	series = {Lecture Notes in Pure and Appl. Math.},
	title = {Stability and {E}instein-{K}\"{a}hler metric of a quartic del {P}ezzo surface},
	volume = {145},
	year = {1993}}

@article{odaka_spotti_sun,
	author = {Odaka, Yuji and Spotti, Cristiano and Sun, Song},
	date-added = {2023-03-15 11:02:50 +0000},
	date-modified = {2023-03-15 11:03:01 +0000},
	fjournal = {Journal of Differential Geometry},
	issn = {0022-040X},
	journal = {J. Differential Geom.},
	number = {1},
	pages = {127--172},
	title = {Compact moduli spaces of del {P}ezzo surfaces and {K}\"{a}hler-{E}instein metrics},
	url = {http://projecteuclid.org/euclid.jdg/1452002879},
	volume = {102},
	year = {2016},
	bdsk-url-1 = {http://projecteuclid.org/euclid.jdg/1452002879}}

@book{iskovskikh_prokhorov,
	author = {Iskovskikh, V. A. and Prokhorov, Yu. G.},
	date-added = {2023-03-15 10:59:54 +0000},
	date-modified = {2023-03-15 11:01:00 +0000},
	editor = {Shafarevich, I. R.},
	isbn = {3-540-61468-0},
	note = {Fano varieties, A translation of {{\i}t Algebraic geometry. 5} (Russian), Ross. Akad. Nauk, Vseross. Inst. Nauchn. i Tekhn. Inform., Moscow, Translation edited by A. N. Parshin and I. R. Shafarevich},
	pages = {iv+247},
	publisher = {Springer-Verlag, Berlin},
	series = {Encyclopaedia of Mathematical Sciences},
	title = {Algebraic geometry. {V}},
	volume = {47},
	year = {1999}}

@article{tian_KstabKE,
	author = {Tian, Gang},
	date-added = {2023-03-14 19:08:41 +0000},
	date-modified = {2023-03-14 19:08:57 +0000},
	doi = {10.1002/cpa.21578},
	fjournal = {Communications on Pure and Applied Mathematics},
	issn = {0010-3640},
	journal = {Comm. Pure Appl. Math.},
	number = {7},
	pages = {1085--1156},
	title = {K-stability and {K}\"{a}hler-{E}instein metrics},
	url = {https://doi.org/10.1002/cpa.21578},
	volume = {68},
	year = {2015},
	bdsk-url-1 = {https://doi.org/10.1002/cpa.21578}}

@article{chen_donaldson_sun,
	author = {Chen, Xiuxiong and Donaldson, Simon and Sun, Song},
	date-added = {2023-03-14 19:05:04 +0000},
	date-modified = {2023-03-14 19:05:53 +0000},
	doi = {10.1090/S0894-0347-2014-00799-2},
	fjournal = {Journal of the American Mathematical Society},
	issn = {0894-0347},
	journal = {J. Amer. Math. Soc.},
	number = {1},
	pages = {183--278},
	title = {K\"{a}hler-{E}instein metrics on {F}ano manifolds. {I}, {II} and {III}},
	url = {https://doi.org/10.1090/S0894-0347-2014-00799-2},
	volume = {28},
	year = {2015},
	bdsk-url-1 = {https://doi.org/10.1090/S0894-0347-2014-00799-2}}

@book{cox_little_schenck,
	author = {Cox, David A. and Little, John B. and Schenck, Henry K.},
	date-added = {2023-03-14 18:26:36 +0000},
	date-modified = {2023-03-14 18:26:46 +0000},
	doi = {10.1090/gsm/124},
	isbn = {978-0-8218-4819-7},
	pages = {xxiv+841},
	publisher = {American Mathematical Society, Providence, RI},
	series = {Graduate Studies in Mathematics},
	title = {Toric varieties},
	url = {https://doi.org/10.1090/gsm/124},
	volume = {124},
	year = {2011},
	bdsk-url-1 = {https://doi.org/10.1090/gsm/124}}

@book{calabi_problem_book,
	author = {Araujo, Carolina and Castravet, Ana-Maria and Cheltsov, Ivan and Fujita, Kento and Kaloghiros, Anne-Sophie and Martinez-Garcia, Jesus and Shramov, Constantin and S{\"u}ss, Hendrik and Viswanathan, Nivedita},
	collection = {London Mathematical Society Lecture Note Series},
	place = {Cambridge},
	publisher = {Cambridge University Press},
	series = {London Mathematical Society Lecture Note Series},
	title = {The {C}alabi Problem for {F}ano Threefolds},
	year = {2023}}

@article{quantum_periods,
	author = {Coates, Tom and Corti, Alessio and Galkin, Sergey and Kasprzyk, Alexander},
	date-added = {2023-03-13 16:59:36 +0000},
	date-modified = {2023-03-13 16:59:47 +0000},
	doi = {10.2140/gt.2016.20.103},
	fjournal = {Geometry \& Topology},
	issn = {1465-3060},
	journal = {Geom. Topol.},
	number = {1},
	pages = {103--256},
	title = {Quantum periods for 3-dimensional {F}ano manifolds},
	url = {https://doi.org/10.2140/gt.2016.20.103},
	volume = {20},
	year = {2016},
	bdsk-url-1 = {https://doi.org/10.2140/gt.2016.20.103}}

@article{Batyrev,
	author = {Victor V. Batyrev},
	date-added = {2023-03-13 16:33:59 +0000},
	date-modified = {2023-03-13 16:33:59 +0000},
	doi = {10.2748/tmj/1178227429},
	journal = {Tohoku Mathematical Journal},
	number = {4},
	pages = {569 -- 585},
	publisher = {Tohoku University, Mathematical Institute},
	title = {{On the classification of smooth projective toric varieties}},
	url = {https://doi.org/10.2748/tmj/1178227429},
	volume = {43},
	year = {1991},
	bdsk-url-1 = {https://doi.org/10.2748/tmj/1178227429}}

@unpublished{Heuberger_QFLI,
	author = {Heuberger, Liana},
	copyright = {Creative Commons Attribution 4.0 International},
	date-added = {2023-03-12 15:06:36 +0000},
	date-modified = {2024-09-25 16:02:34 +0200},
	doi = {10.48550/ARXIV.2202.04184},
	keywords = {Algebraic Geometry (math.AG), FOS: Mathematics, FOS: Mathematics, 14J33 (Primary), 14J45, 14M25 (Secondary)},
	note = {\arxiv{2202.04184} (to appear in {N}agoya {M}ath. {J}.)},
	title = {$\mathbb{Q}$-{F}ano threefolds and {L}aurent inversion},
	bdsk-url-1 = {https://doi.org/10.48550/ARXIV.2202.04184}}

@article{kreuzer_skarke,
	author = {Kreuzer, Maximilian and Skarke, Harald},
	date-added = {2023-03-12 14:52:06 +0000},
	date-modified = {2023-03-12 14:52:13 +0000},
	doi = {10.4310/ATMP.1998.v2.n4.a5},
	fjournal = {Advances in Theoretical and Mathematical Physics},
	issn = {1095-0761},
	journal = {Adv. Theor. Math. Phys.},
	number = {4},
	pages = {853--871},
	title = {Classification of reflexive polyhedra in three dimensions},
	url = {https://doi.org/10.4310/ATMP.1998.v2.n4.a5},
	volume = {2},
	year = {1998},
	bdsk-url-1 = {https://doi.org/10.4310/ATMP.1998.v2.n4.a5}}

@article{Kasprzyk,
	author = {Kasprzyk, Alexander},
	date-added = {2023-03-12 14:34:52 +0000},
	date-modified = {2023-03-12 14:35:12 +0000},
	fjournal = {Canadian Journal of Mathematics},
	journal = {Canadian Journal of Mathematics},
	number = {6},
	pages = {1293-1309},
	title = {Canonical toric {F}ano threefolds},
	volume = {62},
	year = {2010}}

@misc{GRDB-toric3,
	author = {Brown, Gavin and Kasprzyk, Alexander M.},
	date-added = {2023-03-12 14:34:23 +0000},
	date-modified = {2023-03-12 14:34:23 +0000},
	howpublished = {Database of toric canonical $3$-folds, online},
	note = {\url{http://grdb.co.uk/forms/toricf3c}},
	title = {The {G}raded {R}ing {D}atabase}}

@article{projectivity_K_moduli_final,
	author = {Liu, Yuchen and Xu, Chenyang and Zhuang, Ziquan},
	fjournal = {Annals of Mathematics. Second Series},
	journal = {Ann. of Math. (2)},
	number = {2},
	pages = {507--566},
	title = {Finite generation for valuations computing stability thresholds and applications to {K}-stability},
	volume = {196},
	year = {2022}}

@article{BLX,
	author = {Blum, Harold and Liu, Yuchen and Xu, Chenyang},
	fjournal = {Duke Mathematical Journal},
	journal = {Duke Math. J.},
	number = {13},
	pages = {2753--2797},
	title = {Openness of {K}-semistability for {F}ano varieties},
	volume = {171},
	year = {2022},
	bdsk-file-1 = {YnBsaXN0MDDSAQIDBFxyZWxhdGl2ZVBhdGhYYm9va21hcmtfEDAuLi8uLi8uLi8uLi8uLi9Eb3dubG9hZHMvY2l0YXRpb24tZG1qMTY2XzE3Ny5iaWJPEQOkYm9va6QDAAAAAAQQMAAAAAAAAAAAAAAAAAAAAAAAAAAAAAAAAAAAAAAAAAAAAAAAoAIAAAUAAAABAQAAVXNlcnMAAAAFAAAAAQEAAGxpYW5hAAAACQAAAAEBAABEb3dubG9hZHMAAAAXAAAAAQEAAGNpdGF0aW9uLWRtajE2Nl8xNzcuYmliABAAAAABBgAABAAAABQAAAAkAAAAOAAAAAgAAAAEAwAACgILAAAAAAAIAAAABAMAABRtEAAAAAAACAAAAAQDAAAlbRAAAAAAAAgAAAAEAwAAHzMRBgAAAAAQAAAAAQYAAHAAAACAAAAAkAAAAKAAAAAIAAAAAAQAAEHGUhZ9DpSsGAAAAAECAAABAAAAAAAAAA8AAAAAAAAAAAAAAAAAAAAIAAAABAMAAAIAAAAAAAAABAAAAAMDAAD1AQAACAAAAAEJAABmaWxlOi8vLwwAAAABAQAATWFjaW50b3NoIEhECAAAAAQDAAAAUAZeOgAAAAgAAAAABAAAQcYsf54AAAAkAAAAAQEAADQ1MUU5MzE0LTlFRDYtNDA0NC04RDI0LTY2RjIxQTk2QkY4QRgAAAABAgAAgQAAAAEAAADvEwAAAQAAAAAAAAAAAAAAAQAAAAEBAAAvAAAAAAAAAAEFAADgAAAAAQIAADNmM2ZmYmM4ZTAyOTBhM2Y5YTM0MmQzMTUzMzU3NjUwYjMzMTI2OGIzN2MyMjljNzM4ZDZjOWNmYTY4YTdjZjA7MDA7MDAwMDAwMDA7MDAwMDAwMDA7MDAwMDAwMDA7MDAwMDAwMDAwMDAwMDAyMDtjb20uYXBwbGUuYXBwLXNhbmRib3gucmVhZC13cml0ZTswMTswMTAwMDAwNDswMDAwMDAwMDA2MTEzMzFmOzNhOy91c2Vycy9saWFuYS9kb3dubG9hZHMvY2l0YXRpb24tZG1qMTY2XzE3Ny5iaWIAzAAAAP7///8BAAAAAAAAABAAAAAEEAAAWAAAAAAAAAAFEAAAsAAAAAAAAAAQEAAA2AAAAAAAAABAEAAAyAAAAAAAAAACIAAApAEAAAAAAAAFIAAAFAEAAAAAAAAQIAAAJAEAAAAAAAARIAAAWAEAAAAAAAASIAAAOAEAAAAAAAATIAAASAEAAAAAAAAgIAAAhAEAAAAAAAAwIAAAsAEAAAAAAAABwAAA+AAAAAAAAAARwAAAFAAAAAAAAAASwAAACAEAAAAAAACA8AAAuAEAAAAAAAAACAANABoAIwBWAAAAAAAAAgEAAAAAAAAABQAAAAAAAAAAAAAAAAAAA/4=}}

@book{cox_little_oshea,
	author = {Cox, David A. and Little, John and O'Shea, Donal},
	edition = {Fourth},
	note = {An introduction to computational algebraic geometry and commutative algebra},
	pages = {xvi+646},
	publisher = {Springer, Cham},
	series = {Undergraduate Texts in Mathematics},
	title = {Ideals, varieties, and algorithms},
	year = {2015}}

@incollection{petracci_survey,
	author = {Petracci, Andrea},
	booktitle = {Interactions with lattice polytopes},
	pages = {287--314},
	publisher = {Springer, Cham},
	series = {Springer Proc. Math. Stat.},
	title = {On deformations of toric {F}ano varieties},
	volume = {386},
	year = {2022}}

@article{MM82,
	author = {Mori, Shigefumi and Mukai, Shigeru},
	fjournal = {Manuscripta Mathematica},
	journal = {Manuscripta Math.},
	number = {2},
	pages = {147--162},
	title = {Classification of {F}ano {$3$}-folds with {$B\sb{2}\geq 2$}},
	volume = {36},
	year = {1981/82}}

@article{MM03,
	author = {Mori, Shigefumi and Mukai, Shigeru},
	fjournal = {Manuscripta Mathematica},
	journal = {Manuscripta Math.},
	number = {3},
	pages = {407},
	title = {Erratum: ``{C}lassification of {F}ano 3-folds with {$B\sb 2\geq 2$}'' [{M}anuscripta {M}ath. {\bf 36} (1981/82), no. 2, 147--162]},
	volume = {110},
	year = {2003}}

@article{properness_K_moduli,
	author = {Blum, Harold and Halpern-Leistner, Daniel and Liu, Yuchen and Xu, Chenyang},
	fjournal = {Selecta Mathematica. New Series},
	journal = {Selecta Math. (N.S.)},
	number = {4},
	pages = {Paper No. 73, 39},
	title = {On properness of {K}-moduli spaces and optimal degenerations of {F}ano varieties},
	volume = {27},
	year = {2021}}

@article{ask_petracci,
	author = {Kaloghiros, Anne-Sophie and Petracci, Andrea},
	fjournal = {Transactions of the American Mathematical Society. Series B},
	journal = {Trans. Amer. Math. Soc. Ser. B},
	pages = {548--577},
	title = {On toric geometry and {K}-stability of {F}ano varieties},
	volume = {8},
	year = {2021}}

@article{lwx,
	author = {Li, Chi and Wang, Xiaowei and Xu, Chenyang},
	fjournal = {Journal of the American Mathematical Society},
	journal = {J. Amer. Math. Soc.},
	number = {4},
	pages = {1175--1214},
	title = {Algebraicity of the metric tangent cones and equivariant {K}-stability},
	volume = {34},
	year = {2021}}

@article{xu_survey,
	author = {Xu, Chenyang},
	fjournal = {EMS Surveys in Mathematical Sciences},
	journal = {EMS Surv. Math. Sci.},
	number = {1-2},
	pages = {265--354},
	title = {K-stability of {F}ano varieties: an algebro-geometric approach},
	volume = {8},
	year = {2021}}

@article{luna_etale_slice_stacks,
	author = {Alper, Jarod and Hall, Jack and Rydh, David},
	fjournal = {Annals of Mathematics. Second Series},
	journal = {Ann. of Math. (2)},
	number = {3},
	pages = {675--738},
	title = {A {L}una \'{e}tale slice theorem for algebraic stacks},
	volume = {191},
	year = {2020}}

@article{berman_polystability,
	author = {Berman, Robert J.},
	fjournal = {Inventiones Mathematicae},
	journal = {Invent. Math.},
	number = {3},
	pages = {973--1025},
	title = {K-polystability of {${\Bbb Q}$}-{F}ano varieties admitting {K}\"{a}hler-{E}instein metrics},
	volume = {203},
	year = {2016}}

@article{blum_xu_uniqueness,
	author = {Blum, Harold and Xu, Chenyang},
	fjournal = {Annals of Mathematics. Second Series},
	journal = {Ann. of Math. (2)},
	number = {2},
	pages = {609--656},
	title = {Uniqueness of {K}-polystable degenerations of {F}ano varieties},
	volume = {190},
	year = {2019},
	bdsk-file-1 = {YnBsaXN0MDDSAQIDBFxyZWxhdGl2ZVBhdGhYYm9va21hcmtfEDAuLi8uLi8uLi8uLi8uLi9Eb3dubG9hZHMvY2l0YXRpb24tZG1qMTY2XzE3Ny5iaWJPEQOkYm9va6QDAAAAAAQQMAAAAAAAAAAAAAAAAAAAAAAAAAAAAAAAAAAAAAAAAAAAAAAAoAIAAAUAAAABAQAAVXNlcnMAAAAFAAAAAQEAAGxpYW5hAAAACQAAAAEBAABEb3dubG9hZHMAAAAXAAAAAQEAAGNpdGF0aW9uLWRtajE2Nl8xNzcuYmliABAAAAABBgAABAAAABQAAAAkAAAAOAAAAAgAAAAEAwAACgILAAAAAAAIAAAABAMAABRtEAAAAAAACAAAAAQDAAAlbRAAAAAAAAgAAAAEAwAAHzMRBgAAAAAQAAAAAQYAAHAAAACAAAAAkAAAAKAAAAAIAAAAAAQAAEHGUhZ9DpSsGAAAAAECAAABAAAAAAAAAA8AAAAAAAAAAAAAAAAAAAAIAAAABAMAAAIAAAAAAAAABAAAAAMDAAD1AQAACAAAAAEJAABmaWxlOi8vLwwAAAABAQAATWFjaW50b3NoIEhECAAAAAQDAAAAUAZeOgAAAAgAAAAABAAAQcYsf54AAAAkAAAAAQEAADQ1MUU5MzE0LTlFRDYtNDA0NC04RDI0LTY2RjIxQTk2QkY4QRgAAAABAgAAgQAAAAEAAADvEwAAAQAAAAAAAAAAAAAAAQAAAAEBAAAvAAAAAAAAAAEFAADgAAAAAQIAADNmM2ZmYmM4ZTAyOTBhM2Y5YTM0MmQzMTUzMzU3NjUwYjMzMTI2OGIzN2MyMjljNzM4ZDZjOWNmYTY4YTdjZjA7MDA7MDAwMDAwMDA7MDAwMDAwMDA7MDAwMDAwMDA7MDAwMDAwMDAwMDAwMDAyMDtjb20uYXBwbGUuYXBwLXNhbmRib3gucmVhZC13cml0ZTswMTswMTAwMDAwNDswMDAwMDAwMDA2MTEzMzFmOzNhOy91c2Vycy9saWFuYS9kb3dubG9hZHMvY2l0YXRpb24tZG1qMTY2XzE3Ny5iaWIAzAAAAP7///8BAAAAAAAAABAAAAAEEAAAWAAAAAAAAAAFEAAAsAAAAAAAAAAQEAAA2AAAAAAAAABAEAAAyAAAAAAAAAACIAAApAEAAAAAAAAFIAAAFAEAAAAAAAAQIAAAJAEAAAAAAAARIAAAWAEAAAAAAAASIAAAOAEAAAAAAAATIAAASAEAAAAAAAAgIAAAhAEAAAAAAAAwIAAAsAEAAAAAAAABwAAA+AAAAAAAAAARwAAAFAAAAAAAAAASwAAACAEAAAAAAACA8AAAuAEAAAAAAAAACAANABoAIwBWAAAAAAAAAgEAAAAAAAAABQAAAAAAAAAAAAAAAAAAA/4=}}

@article{xu_minimizing,
	author = {Xu, Chenyang},
	fjournal = {Annals of Mathematics. Second Series},
	journal = {Ann. of Math. (2)},
	number = {3},
	pages = {1003--1030},
	title = {A minimizing valuation is quasi-monomial},
	volume = {191},
	year = {2020}}

@article{jiang_boundedness,
	author = {Jiang, Chen},
	fjournal = {Annales Scientifiques de l'\'{E}cole Normale Sup\'{e}rieure. Quatri\`eme S\'{e}rie},
	journal = {Ann. Sci. \'{E}c. Norm. Sup\'{e}r. (4)},
	number = {5},
	pages = {1235--1248},
	title = {Boundedness of {$\Bbb Q$}-{F}ano varieties with degrees and alpha-invariants bounded from below},
	volume = {53},
	year = {2020}}

@article{ABHLX,
	author = {Alper, Jarod and Blum, Harold and Halpern-Leistner, Daniel and Xu, Chenyang},
	fjournal = {Inventiones Mathematicae},
	journal = {Invent. Math.},
	number = {3},
	pages = {995--1032},
	title = {Reductivity of the automorphism group of {K}-polystable {F}ano varieties},
	volume = {222},
	year = {2020}}

@article{xu_zhuang,
	author = {Xu, Chenyang and Zhuang, Ziquan},
	fjournal = {Annals of Mathematics. Second Series},
	journal = {Ann. of Math. (2)},
	number = {3},
	pages = {1005--1068},
	title = {On positivity of the {CM} line bundle on {K}-moduli spaces},
	volume = {192},
	year = {2020}}

@article{laurent_inversion,
	author = {Coates, Tom and Kasprzyk, Alexander and Prince, Thomas},
	fjournal = {Pure and Applied Mathematics Quarterly},
	journal = {Pure Appl. Math. Q.},
	number = {4},
	pages = {1135--1179},
	title = {Laurent inversion},
	volume = {15},
	year = {2019}}

@article{cracked_fano_toric_ci,
	author = {Prince, Thomas},
	fjournal = {Manuscripta Mathematica},
	journal = {Manuscripta Math.},
	number = {1-2},
	pages = {165--183},
	title = {Cracked polytopes and {F}ano toric complete intersections},
	volume = {163},
	year = {2020}}

@article{from_cracked_to_fano,
	author = {Prince, Thomas},
	fjournal = {Manuscripta Mathematica},
	journal = {Manuscripta Math.},
	number = {1-2},
	pages = {267--320},
	title = {From cracked polytopes to {F}ano threefolds},
	volume = {164},
	year = {2021}}

@article{donaldson_stability,
	author = {Donaldson, S. K.},
	fjournal = {Journal of Differential Geometry},
	journal = {J. Differential Geom.},
	number = {2},
	pages = {289--349},
	title = {Scalar curvature and stability of toric varieties},
	volume = {62},
	year = {2002}}

@article{tian_KE,
	author = {Tian, Gang},
	fjournal = {Inventiones Mathematicae},
	journal = {Invent. Math.},
	number = {1},
	pages = {1--37},
	title = {K\"{a}hler-{E}instein metrics with positive scalar curvature},
	volume = {130},
	year = {1997}}

\end{document}